\documentclass[a4paper, 11pt]{amsart}

\usepackage{amsmath, amssymb, amsthm, mathtools}
\usepackage[colorlinks=true, linkcolor=blue]{hyperref}

\usepackage[margin=1in]{geometry}

\usepackage{listings}

\newtheorem{theorem}{Theorem}[section]
\newtheorem{lemma}[theorem]{Lemma}
\newtheorem{proposition}[theorem]{Proposition}
\newtheorem{corollary}[theorem]{Corollary}

\theoremstyle{definition}
\newtheorem{definition}[theorem]{Definition}
\newtheorem{example}[theorem]{Example}
\newtheorem{remark}[theorem]{Remark}
\newtheorem{question}[theorem]{Question}

\newcommand{\NN}{\mathbb{N}}
\newcommand{\ZZ}{\mathbb{Z}}
\newcommand{\QQ}{\mathbb{Q}}
\newcommand{\RR}{\mathbb{R}}
\newcommand{\FF}{\mathbb{F}}
\newcommand{\PP}{\mathbb{P}}

\newcommand{\dimgr}{\dim_{\mathrm{gr}}}
\newcommand{\Spec}{\operatorname{Spec}}
\newcommand{\height}{\operatorname{ht}}
\newcommand{\trdeg}{\operatorname{tr.deg}}
\newcommand{\gkdim}{\operatorname{GKdim}}
\newcommand{\Frac}{\operatorname{Frac}}
\newcommand{\rank}{\operatorname{rank}}
\renewcommand{\div}{\operatorname{div}}

\title[On Krull's Dimension Theorem for Certain Graded Rings and Its Applications]{On Krull's Dimension Theorem for Certain Graded Rings and Its Applications}

\author{Rirai Ikeda}
\address{Department of Mathematics, School of Science, Science Tokyo, 2-12-1 Ookayama, Meguro-ku, Tokyo 152-8551, Japan}
\email{ikeda.r.cfbf@m.isct.ac.jp}

\begin{document}

\begin{abstract}
    This paper explores the dimension theory of non-Noetherian graded rings by introducing the class of Hilbert--Serre rings. Motivated by Krull's dimension theorem and Smoke's dimension theorem, we establish the fundamental inequalities
    \begin{align*}
        \dimgr(R) \leq \dim(R) \leq \gkdim_k(R) \leq d(R)
    \end{align*}
    for any Hilbert--Serre ring $R$, where $d(R)$ is the pole order of its Poincar\'e series at $t=1$. We then prove that all these dimensions, including the transcendence degree in the domain case, coincide for monomial algebras. As an application, let $S$ be a finitely generated homogeneous subalgebra of a polynomial ring over a field $k$, and let $\mathrm{in}_{<}(S)$ be its initial algebra with respect to a monomial order $<$. Even though $\mathrm{in}_{<}(S)$ need not be Noetherian, we prove that taking initial algebras preserves all the dimensions considered in this paper; in particular,
    \begin{align*}
        \dim(S)=\dim(\mathrm{in}_{<}(S)).
    \end{align*}
    Finally, we provide explicit examples demonstrating that these inequalities can be strict in general, even for Hilbert--Serre domains.
\end{abstract}

\maketitle

\tableofcontents

\section{Introduction}

Throughout this paper, all rings are commutative with identity, and all ring homomorphisms preserve identity.

Dimension theory for Noetherian rings has been one of the central topics in commutative algebra. One of the crowning achievements in this area is Krull's dimension theorem \cite[Theorem 13.4]{Mat86}, which provides a fundamental understanding of the Krull dimension. In the graded setting, Smoke's dimension theorem states that for a Noetherian $\NN$-graded ring $R$, the Krull dimension coincides with the order of the pole at $t=1$ of its Poincar\'e series, denoted by $d(R)$ (Definition \ref{def:hilbertserre}). 

\begin{theorem}[Smoke's dimension theorem ({\cite[Theorem 5.5]{Smo72}})]\label{th:smoke}
    Let $R = \bigoplus_{n \geq 0} R_n$ be a Noetherian $\NN$-graded ring with $R_0$ a field. Then we have: 
    \begin{equation*}
        \dim(R) = d(R) = s(R),
    \end{equation*}
    where $\dim(R)$ is the Krull dimension of $R$, $d(R)$ is the order of the pole at $t=1$ of the Poincar\'e series of $R$, i.e., 
    \begin{align*}
        d(R) = \min \left\{ d \in \NN \;\middle|\; \lim_{t \to 1-0} (1-t)^d P_R(t) < \infty \right\},
    \end{align*}
    and $s(R)$ is the minimal number of homogeneous elements of $R$ such that $R$ is a finitely generated module over the subalgebra generated by those elements over $R_0$.
\end{theorem}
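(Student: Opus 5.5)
We prove the cycle of inequalities $\dim(R) \leq s(R)$, $s(R) \leq d(R)$ (in the sharper form $d(R)\le s(R)$ plus an existence statement), and $d(R) \leq \dim(R)$; the main tool is graded Noether normalization. First note that $R$ Noetherian with $R_0=k$ a field forces $R$ to be a finitely generated $k$-algebra, generated by finitely many homogeneous elements of positive degree, say of degrees $e_1,\dots,e_m$. Hence each $R_n$ is finite-dimensional and $P_R(t)=\sum_n \dim_k(R_n)t^n$ is well defined. By the Hilbert--Serre theorem it is a rational function of the form $f(t)/\prod_{i}(1-t^{e_i})$ with $f \in \ZZ[t]$. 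Since the coefficients are nonnegative, the limit defining $d(R)$ makes sense, and $d(R)$ is the order of the pole at $t=1$.

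\emph{Step 1: $d(R)\le s(R)$ and $\dim(R)\le s(R)$.} Let $x_1,\dots,x_s$ be homogeneous elements of positive degrees $a_i$ such that $R$ is finite over $A=k[x_1,\dots,x_s]$. Then $R$ is generated as a graded $A$-module by finitely many homogeneous elements, so $R$ is a graded quotient of a finite free module $\bigoplus_j A(-b_j)$. Moreover $A$ is itself a quotient of the polynomial ring $k[X_1,\dots,X_s]$ with $\deg X_i=a_i$. Comparing coefficients gives
\[
P_R(t) \le \sum_j t^{b_j}\prod_{i=1}^s (1-t^{a_i})^{-1}
\]
coefficientwise, and the right-hand side has a pole of order at most $s$ at $t=1$. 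Hence $d(R)\le s$. Also $R$ is integral over $A$, so $\dim R=\dim A\le s$ by going-up and incomparability.

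\emph{Step 2: $s(R)\le \dim(R)$.} This is graded Noether normalization. If $\mathfrak m=R_+$ is not nilpotent, prime avoidance for homogeneous elements lets us choose a homogeneous $x$ of positive degree outside all minimal primes. Prime avoidance needs care with differing degrees; we handle it by passing to suitable powers of the generators so that all candidates share a common degree. Then $\dim R/xR\le \dim R-1$. Iterating this, we obtain homogeneous $x_1,\dots,x_r$ with $r\le\dim R$ and $R/(x_1,\dots,x_r)$ Artinian, i.e. finite-dimensional over $k$. By the graded Nakayama lemma, $R$ is then a finite module over $k[x_1,\dots,x_r]$, so $s(R)\le r\le \dim R$.

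\emph{Step 3: $\dim R\le d(R)$.} This is the main obstacle: we must show the pole really has order $\dim R$ and not less. Take a Noether normalization $A=k[x_1,\dots,x_r]\subseteq R$ with $r=\dim R$, as in Step 2. Since $\dim A=\dim R=r$ and $A$ is a quotient of the polynomial ring on $r$ variables, the $x_i$ must be algebraically independent. Therefore $P_A(t)=\prod_i(1-t^{a_i})^{-1}$, which has a pole of order exactly $r$. Since $1\in R_0$, the inclusion $A\subseteq R$ is a graded inclusion, so $P_A\le P_R$ coefficientwise. For $t\to 1^-$ the series have nonnegative terms, so $(1-t)^{d}P_R(t)\ge (1-t)^dP_A(t)$, which is unbounded for $d<r$. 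Thus $d(R)\ge r$.

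Combining the three steps gives $\dim R\le d(R)\le s(R)\le \dim R$, hence $\dim(R)=d(R)=s(R)$, which is the statement of Theorem~\ref{th:smoke}.
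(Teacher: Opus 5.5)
The paper gives no proof of this theorem; it is quoted from \cite[Theorem 5.5]{Smo72} and used as a black box. Your argument is correct. It is the standard proof via a homogeneous system of parameters, closing the cycle $\dim(R)\le d(R)\le s(R)\le\dim(R)$.

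It is worth setting your proof against what the paper does prove for arbitrary Hilbert--Serre rings. Your Step~3 says that a graded polynomial subring $k[x_1,\dots,x_r]\subseteq R$ forces a pole of order at least $r$. This is exactly the mechanism of Lemma~\ref{lem:trdegd} via Proposition~\ref{prop:hilbertserre}(3). The paper obtains $\dim(R)\le d(R)$ with no Noetherian hypothesis and no Noether normalization: it passes to $R/\mathfrak p$ for a homogeneous minimal prime and uses $\dim\le\operatorname{tr.deg}$ (Lemma~\ref{lem:dimd}).

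The genuinely Noetherian input in your proof is Step~2: finitely many minimal primes, homogeneous prime avoidance, and termination of the cutting-down process. This step delivers the reverse inequality $d(R)\le s(R)\le\dim(R)$. Examples~\ref{ex:3}, \ref{ex:counter} and \ref{ex:countergr} show that precisely this direction fails for non-Noetherian Hilbert--Serre rings.

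A few details should be tightened:
\begin{itemize}
\item In Step~2 you should say why no minimal prime contains $R_+$. Minimal primes are homogeneous, hence contained in $R_+$. So a minimal prime containing $R_+$ would equal $R_+$ and be the unique minimal prime, making $R_+$ nilpotent.
\item Your remark about ``passing to powers so that candidates share a common degree'' must be implemented by the usual inductive homogeneous prime avoidance. Replace $y$ by $y^a+z^b$ with $a\deg y=b\deg z$, where $z$ is a homogeneous element of $R_+\mathfrak p_1\cdots\mathfrak p_{n-1}$ outside $\mathfrak p_n$. Do not pick a general element of a single graded piece $R_e$, since that would require $k$ to be infinite.
\item In Step~3, $A$ is a graded subring because the $x_i$ are homogeneous, not because $1\in R_0$.
\end{itemize}
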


This theorem can be applied to the study of dimensions of initial algebras. Let
\begin{align*}
    S=k[f_1,\ldots,f_m]\subset k[x_1,\ldots,x_n]
\end{align*}
be a homogeneous subalgebra of a polynomial ring over a field $k$ generated by homogeneous elements $f_1, \ldots, f_m$, and let $<$ be a monomial order. The initial algebra $\mathrm{in}_{<}(S)$, recalled in Definition \ref{def:inisagbi}, is the $k$-subalgebra generated by the initial monomials of elements of $S$, i.e., $\mathrm{in}_{<}(S) = k[\mathrm{in}_{<}(g) \mid g \in S]$. Since $\mathrm{in}_{<}(S)$ is a monomial algebra (i.e., generated by only monomials), its dimension is often easier to compute than that of $S$. In particular, the theorem of Arnold--Gilmer (\cite{AG76}, \cite[Theorem 21.4]{Gil84}) can be used to compute its Krull dimension from the associated monoid, as will be illustrated in Section \ref{sec:initial}. Moreover, taking initial algebras preserves the Poincar\'e series:
\begin{align*}
    P_S(t)=P_{\mathrm{in}_{<}(S)}(t),
\end{align*}
as claimed in \cite[Proposition 2.4]{CHV96}. Since $S$ is finitely generated, Smoke's dimension theorem applies to $S$. If $\mathrm{in}_{<}(S)$ is also Noetherian, then Smoke's dimension theorem gives
\begin{align*}
    \dim(S)
    &=d(S) \\
    &=d(\mathrm{in}_{<}(S)) \\
    &=\dim(\mathrm{in}_{<}(S)).
\end{align*}
Thus, in this case, passing to the initial algebra preserves the Krull dimension, and one may compute the dimension of $S$ through the monomial algebra $\mathrm{in}_{<}(S)$. However, $\mathrm{in}_{<}(S)$ need not be Noetherian in general (Example \ref{ex:nonnoeth1} and Example \ref{ex:nonnoeth2}). This motivates a non-Noetherian extension of Smoke's dimension theorem.

For this purpose, we introduce the class of Hilbert--Serre rings in Definition \ref{def:hilbertserre}. This class is designed to include graded rings whose Poincar\'e series has a finite pole order at $t=1$, even without assuming Noetherianity; for such a ring $R$, this pole order is denoted by $d(R)$ and is called the Hilbert--Serre dimension of $R$. By the Hilbert--Serre theorem, every Noetherian $\NN$-graded ring over a field is a Hilbert--Serre ring. On the other hand, this class also contains many non-Noetherian graded rings, including monomial subalgebras of polynomial rings. Thus, Hilbert--Serre rings provide a natural setting in which one can ask how much of Smoke's dimension theorem remains valid after removing the Noetherian hypothesis.

For a Hilbert--Serre ring $R$, the invariant $d(R)$ is defined without assuming Noetherianity. It is therefore natural to ask whether $d(R)$ still measures the Krull dimension, as in Smoke's dimension theorem. In the non-Noetherian setting, one can also compare several other dimension invariants which are closely related to the Krull dimension in Noetherian situations. In this paper, we consider the following five invariants:
\begin{itemize}
    \item $\dimgr(R)$: the graded Krull dimension of $R$,
    \item $\dim(R)$: the Krull dimension of $R$,
    \item $\trdeg_k(R)$: the transcendence degree of $R$ over $k$, defined only when $R$ is a domain,
    \item $\gkdim_k(R)$: the Gelfand--Kirillov dimension of $R$ over $k$,
    \item $d(R)$: the order of the pole at $t=1$ of the Poincar\'e series of $R$.
\end{itemize}
Our first main result shows that the equalities in Smoke's dimension theorem do not hold for arbitrary Hilbert--Serre rings, but that they are replaced by the following chain of inequalities.
\begin{theorem}[Theorem {\ref{th:main}}]\label{th:intro-main}
    Let $R = \bigoplus_{n \geq 0} R_n$ be an $\NN$-graded ring with $R_0 = k$ a field. If $R$ is a Hilbert--Serre ring, then we have:
    \begin{align*}
        \dimgr(R) \leq  \dim(R) \leq \gkdim_k(R) \leq d(R).
    \end{align*}
    Moreover, if $R$ is a domain, then we have:
    \begin{align*}
        \dimgr(R) \leq \dim(R) \leq \trdeg_k(R) = \gkdim_k(R) \leq d(R).
    \end{align*}
\end{theorem}
The appearance of the Gelfand--Kirillov dimension is natural from this point of view. The transcendence degree is available only for domains, while $\gkdim_k(R)$ is defined for arbitrary $k$-algebras. Moreover, in the domain case these two invariants agree, as stated in Theorem \ref{th:intro-main}. Thus, $\gkdim_k(R)$ plays the role of $\trdeg_k(R)$ in the general, possibly non-domain, case. The examples in Section \ref{sec:examples} show that the inequalities in Theorem \ref{th:intro-main} can be strict in general, even for Hilbert--Serre domains.

Although the inequalities in Theorem \ref{th:intro-main} can be strict in general, they become equalities for monomial algebras. This is the second main result of this paper and can be viewed as a non-Noetherian version of Smoke's dimension theorem for monomial algebras.
\begin{theorem}[Theorem {\ref{th:monomial}}]\label{th:intro-monomial}
Let $S$ be a monomial algebra over a field $k$. In other words, $S$ is a $k$-subalgebra of a polynomial ring $R = k[x_1, \ldots, x_n]$ generated by monomials, not necessarily Noetherian. Regarding $S$ as an $\NN$-graded ring with the standard grading inherited from $R$ as mentioned in Definition \ref{def:monomial}, we have:
\begin{align*}
\dimgr(S) = \dim(S) = \trdeg_k(S) = \gkdim_k(S) = d(S).
\end{align*}
\end{theorem}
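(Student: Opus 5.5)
Since $S$ is a domain (a subring of $R$), Theorem \ref{th:intro-main} gives
\begin{align*}
\dimgr(S) \leq \dim(S) \leq \trdeg_k(S) = \gkdim_k(S) \leq d(S),
\end{align*}
once we know $S$ is a Hilbert--Serre ring. So the plan has two parts: show $S$ is Hilbert--Serre, then close the chain with $d(S) \leq \dimgr(S)$.

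Let $M \subset \NN^n$ be the monoid of exponents of monomials in $S$. Then $S = k[M]$ has $k$-basis the monomials $x^a$ with $a \in M$, and $\dim_k S_m = \#\{a \in M : |a| = m\}$. Write $G = \ZZ M$ and $r = \rank G$, so that $\trdeg_k(S) = r$.

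\textbf{Step 1: $S$ is Hilbert--Serre with $d(S) \leq r$.}
Choose finitely many elements of $M$ spanning $G \otimes \QQ$. They generate a finitely generated monoid $M' \subset M$. Let $C$ be the rational cone generated by $M$; it is contained in $\RR_{\geq 0}^n$, and $M \subset C \cap G$.

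The convex cone of $M$ may not be finitely generated. Still, $C \cap \{|a| = m\}$ lies in the simplex $\{a \in \RR_{\ge0}^n : |a| = m\}$ intersected with the $r$-dimensional space $G \otimes \RR$. This is an $(r-1)$-dimensional polytope slice of size $O(m^{r-1})$. Intersecting with the lattice $G$, which is discrete in $G\otimes\RR$, gives $O(m^{r-1})$ points. Hence $\dim_k S_m = O(m^{r-1})$, and $P_S(t)$ has pole order at most $r$ at $t=1$. Depending on the paper's Definition \ref{def:hilbertserre}, I may also need to express $P_S$ rationally. I expect only a pole-order bound is required, since the definition speaks of "finite pole order". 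If rationality is required, I would instead use the theory of monoid algebras.

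\textbf{Step 2: $\dimgr(S) \geq r$.}
I expect this step to be the main obstacle, since $S$ may be non-Noetherian. My first attempt is to build a chain of monomial prime ideals of length $r$. Monomial primes of $k[M]$ correspond to complements of faces of $M$: subsets $F$ such that $a + b \in F \iff a, b \in F$.

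Pick a linear functional $\ell_1$, positive on $\RR_{>0}^n$ and generic. Let $F_1$ be the set of elements of $M$ that are "lowest" in a lexicographic order along a flag of generic functionals. More concretely, use a generic monomial-order-like valuation $v = (\ell_1, \ldots, \ell_r)$ on $G$ with values in $\RR^r$, ordered lexicographically.

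Define $F_j = \{a \in M : a \text{ lies in a prescribed face of } \overline{C}\}$ for a flag of faces of the closure $\overline{C}$ of dimensions $0 < \cdots < r$. Each $F_j$ is a face of $M$ because faces of a convex cone pull back to faces of the monoid. The ideals $P_j = (x^a : a \in M \setminus F_j)$ are homogeneous primes, with $k[M]/P_j \cong k[F_j]$.

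The delicate point is strictness: each face of $\overline{C}$ in the flag must actually contain elements of $M$ spanning the right dimension. For this I would choose the flag using the Arnold--Gilmer theorem cited in the introduction. That theorem says $\dim k[M] = \rank G$ for a cancellative torsion-free monoid. It gives $\dim(S) = r$ directly, but I need the graded version, so I would mimic its proof. That proof proceeds by induction on the rank, choosing a nonempty proper face $F$ of $M$ of rank $r-1$. Such a face comes from a supporting hyperplane of $C$ meeting $M$ in rank $r-1$, obtained via a valuation argument. Then $P_F$ is a monomial, hence homogeneous, prime, and $\dimgr(S) \geq 1 + \dimgr(k[F])$.

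**Conclusion.**
Combining the two steps,
\begin{align*}
r \leq \dimgr(S) \leq \cdots \leq d(S) \leq r,
\end{align*}
so all five invariants equal $r$.
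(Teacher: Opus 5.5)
\textbf{Gap in Step 2.} Your Step 1 is fine and is essentially the paper's lattice-point count. The closing sandwich $r \le \dimgr(S) \le \cdots \le d(S) \le r$ would also be a clean way to finish. The problem is Step 2.

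When $M$ is not finitely generated, the faces of $\overline{C}$ in a flag need not meet $M$ in subsets of ranks $0,1,\ldots,r$. In fact $M$ may have no face of intermediate rank at all, so no chain of \emph{monomial} primes can have length $r$.

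Concretely, take $M=\{0\}\cup\{(a,b): a,b\ge 1\}$, i.e.\ $S=k+xy\,k[x,y]$, so $r=2$.
\begin{itemize}
\item Suppose a face $F$ contains some $u\neq 0$, and let $v\in M\setminus\{0\}$. For $N\gg 0$ we have $Nu-v\in M$.
\item Since $F$ is closed under addition, $Nu=v+(Nu-v)\in F$, and the face condition then forces $v\in F$.
\item Hence the only faces are $\{0\}$ and $M$, and the only monomial primes are $S_+$ and $0$.
\end{itemize}
Yet $\dimgr(S)=2$, witnessed by the non-monomial homogeneous prime $(x-y)k[x,y]\cap S$, which lies strictly between $0$ and $S_+$. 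The same example shows that the mechanism you attribute to Arnold--Gilmer, a face of $M$ of rank $r-1$, does not exist in general. So mimicking it cannot close the gap.

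\textbf{The missing idea.} The homogeneous primes you need are generally not monomial. The paper obtains them by localizing at a single monomial $x^a$, with $d=\deg(x^a)$:
\begin{itemize}
\item The degree-zero part $A=(S_{x^a})_0=k[M']$ has group of fractions of rank $r-1$, so Arnold--Gilmer gives a chain of primes of length $r-1$ in $A$.
\item These primes extend to homogeneous primes of $(S_{x^a})^{(d)}=A[x^a,x^{-a}]$.
\item They are lifted along the integral extension $A[x^a,x^{-a}]\subset S_{x^a}$ by going-up followed by homogenization.
\item Finally they are contracted to $S$, and the chain is capped with $S_+$.
\end{itemize}

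A shorter repair of your own plan is also available. $k[G]$ is the localization of $S$ at all monomials. Split $G=\ker(\deg)\oplus\ZZ e$. Then $k[G]=k[\ker(\deg)][z,z^{-1}]$ with $z=x^e$, and $k[\ker(\deg)]$ is a Laurent polynomial ring in $r-1$ variables sitting in degree $0$. Extending a chain of primes of length $r-1$ from $k[\ker(\deg)]$ gives homogeneous primes of $k[G]$. These contract to a strict chain of homogeneous primes of $S$ avoiding all monomials, to which you append $S_+$. This gives $\dimgr(S)\ge r$, and with your sandwich it avoids Arnold--Gilmer entirely.
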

Thus, while Smoke's dimension theorem does not extend to all Hilbert--Serre rings as an equality, it does extend in this strong form to monomial algebras. The proof uses the theorem of Arnold--Gilmer to compute the Krull dimension of a monomial algebra through the group associated with its defining monoid.

We now return to the application to initial algebras discussed above. Since $\mathrm{in}_{<}(S)$ is a monomial algebra, Theorem \ref{th:intro-monomial} applies to it even when $\mathrm{in}_{<}(S)$ is not Noetherian. Combining this with the preservation of the Poincar\'e series under taking initial algebras, we obtain the following consequence.
\begin{corollary}[Corollary {\ref{cor:initialdim}}]\label{cor:intro-initial}
    Let $S$ be a finitely generated homogeneous subalgebra of a polynomial ring $R = k[x_1, \ldots, x_n]$ over a field $k$ and $<$ be a monomial order on $R$. Then, all dimensions of $S$ and $\mathrm{in}_<(S)$ are the same, i.e., we have:
    \begingroup
    \setlength{\arraycolsep}{0.3em}
    \begin{equation*}
        \begin{array}{*{10}{c}}
            &
            \dimgr(S)
            & = &
            \dim(S)
            & = &
            \trdeg_k(S)
            & = &
            \gkdim_k(S)
            & = &
            d(S)
            \\
            =
            &
            \dimgr(\mathrm{in}_{<}(S))
            & = &
            \dim(\mathrm{in}_{<}(S))
            & = &
            \trdeg_k(\mathrm{in}_{<}(S))
            & = &
            \gkdim_k(\mathrm{in}_{<}(S))
            & = &
            d(\mathrm{in}_{<}(S)).
        \end{array}
    \end{equation*}
    \endgroup
\end{corollary}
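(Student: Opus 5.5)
The plan is to place $S$ and $\mathrm{in}_<(S)$ in two rows of equal dimensions, each row coming from a result already available, and then to join the rows through the Hilbert--Serre invariant $d$.

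\emph{Row for $S$.} Since $S=k[f_1,\ldots,f_m]$ is generated by homogeneous elements of a polynomial ring, it is a finitely generated, hence Noetherian, $\NN$-graded $k$-algebra with $S_0=k$ (after normalizing so that the $f_i$ have positive degree). It is also a domain. Smoke's dimension theorem (Theorem \ref{th:smoke}) therefore gives $\dim(S)=d(S)$. The domain case of Theorem \ref{th:intro-main} gives the chain
\begin{align*}
\dimgr(S)\le\dim(S)\le\trdeg_k(S)=\gkdim_k(S)\le d(S).
\end{align*}
Because the middle term $\dim(S)$ already equals the right end $d(S)$, all the inner inequalities collapse and $\trdeg_k(S)=\gkdim_k(S)=d(S)$. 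For the left end, $\dimgr(S)=\dim(S)$ is the standard fact for Noetherian graded rings with $S_0$ a field. Concretely, the irrelevant ideal $S_+$ is the unique homogeneous maximal ideal, and a maximal chain of primes can be realized by homogeneous primes, for instance through a homogeneous Noether normalization coming from $s(S)$. If the paper's definition of $\dimgr$ makes this awkward, I would instead prove $\dimgr(S)\ge d(S)$ directly: take a homogeneous system of parameters $y_1,\dots,y_r$ with $r=s(S)=d(S)$, so that $k[y]\subset S$ is a finite extension of a polynomial ring, and lift the chain of primes $(y_1,\dots,y_i)$ by going-up. The lifted primes can be chosen homogeneous, because the homogeneous-prime version of lying-over and going-up holds for finite graded extensions.

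\emph{Row for $\mathrm{in}_<(S)$.} This algebra is generated by monomials, so Theorem \ref{th:intro-monomial} applies without any Noetherian hypothesis and gives
\begin{align*}
\dimgr(\mathrm{in}_<(S))=\dim(\mathrm{in}_<(S))=\trdeg_k(\mathrm{in}_<(S))=\gkdim_k(\mathrm{in}_<(S))=d(\mathrm{in}_<(S)).
\end{align*}

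\emph{Joining the rows.} By \cite[Proposition 2.4]{CHV96}, $P_S(t)=P_{\mathrm{in}_<(S)}(t)$. The invariant $d$ is defined only in terms of the Poincar\'e series, so $d(S)=d(\mathrm{in}_<(S))$, and this links the two rows. The one point to check is that both algebras carry the same grading. The grading on $\mathrm{in}_<(S)$ in Definition \ref{def:monomial} is the standard one inherited from $R$. The grading on $S$ must be the one induced from $R$ as well, which is consistent because the $f_i$ are homogeneous in the standard grading. With matching gradings, the Hilbert function equality $\dim_k S_n=\dim_k \mathrm{in}_<(S)_n$ holds. It follows because each $S_n$ has a $k$-basis whose elements have pairwise distinct initial monomials, and these initial monomials span $\mathrm{in}_<(S)_n$.

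I expect the main obstacle to be only the identification $\dimgr(S)=\dim(S)$ in the Noetherian case. Everything else is direct citation, and this step is handled by the going-up argument given in the row for $S$.
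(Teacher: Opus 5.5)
Your proposal is correct and is the paper's proof. Smoke's theorem gives the first row, Theorem \ref{th:monomial} gives the second, and $P_S(t)=P_{\mathrm{in}_<(S)}(t)$ from Proposition \ref{prop:inipoin} joins the rows through $d(S)=d(\mathrm{in}_<(S))$. Your use of the chain in Theorem \ref{th:main}, the homogeneous going-up argument for $\dimgr(S)=\dim(S)$ (which the paper takes from \cite[Lemma 1.37]{Blu18}), and the check that the gradings match only spell out what the paper's one-line proof leaves implicit.
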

In particular, taking initial algebras preserves the Krull dimension:
\begin{align*}
    \dim(S)=\dim(\mathrm{in}_{<}(S)).
\end{align*}
Thus, even when $\mathrm{in}_{<}(S)$ is non-Noetherian, one can compute the dimension of $S$ by passing to the monomial algebra $\mathrm{in}_{<}(S)$. In Example \ref{ex:initial-nonnoeth-jacobian}, we illustrate this method by a finitely generated homogeneous algebra whose initial algebra is non-Noetherian; its dimension is computed from the associated monoid using the theorem of Arnold--Gilmer, even in a characteristic where the usual Jacobian computation does not detect the dimension.

In Section \ref{sec:examples}, we show that the inequalities in Theorem \ref{th:intro-main} are strict in general. More precisely, we construct Hilbert--Serre domains for which all inequalities
\begin{align*}
    \dimgr(R) \leq \dim(R) \leq \trdeg_k(R) = \gkdim_k(R) \leq d(R)
\end{align*}
are strict. Thus, Smoke's dimension theorem does not extend to arbitrary Hilbert--Serre domains as a statement of equalities. These examples lead to the natural problem stated in Question \ref{qu:open}: what conditions on integers $a,b,c,d,e \in \NN$ are necessary and sufficient for the existence of a Hilbert--Serre domain $R$ such that
\begin{align*}
    \dimgr(R)=a,\ \dim(R)=b,\ \trdeg_k(R)=c,\ \gkdim_k(R)=d,\ d(R)=e\ ?
\end{align*}
The necessary conditions include $a \leq b \leq c=d \leq e$ by Theorem \ref{th:intro-main} and $\lfloor b/2 \rfloor \leq a$ by Proposition \ref{prop:dimupper}, but a complete characterization remains open.

The paper is organized as follows. In Section \ref{sec:preliminaries}, we recall basic facts on graded rings, Poincar\'e series, graded Krull dimension, and homogeneous prime ideals. In Section \ref{sec:hilbertserre}, we introduce Hilbert--Serre rings and establish their basic properties, including the finiteness of the Krull dimension. Section \ref{sec:dimineq} is devoted to the proof of the main dimension inequalities for Hilbert--Serre rings. In Section \ref{sec:monomial}, we prove a non-Noetherian version of Smoke's dimension theorem for monomial algebras. In Section \ref{sec:initial}, we apply this result to initial algebras and prove the dimension preservation result stated in Corollary \ref{cor:initialdim}. We also give an example illustrating how to compute the dimension of a finitely generated homogeneous algebra by passing to a non-Noetherian initial algebra. In Section \ref{sec:examples}, we give examples and counterexamples showing that the hypotheses and inequalities in our results are sharp, and we end with Question \ref{qu:open}.

\subsection*{Acknowledgments}

The author would like to thank the members of his laboratory, Kazuma Shimomoto, Tatsuki Yamaguchi, Ryo Ishizuka, Kazuki Hayashi, Yuto Yamada, Taiga Ozaki, and Tomoya Kobayashi, for careful proofreading and valuable suggestions on wording, style, and the title. The author is deeply grateful to Kanau Shimada for his valuable comments on the construction of Example \ref{ex:3}, to Shigeru Kuroda for his insightful suggestions regarding Example \ref{ex:nonnoeth2}, and to Samuel Griffiths for providing an interesting counterexample which is now incorporated as Example \ref{ex:countergr}. Furthermore, the author thanks Kaiji Kondo for his valuable comments on Definition \ref{def:hilbertserre} and Takehiro Hosaka for his helpful comments that improved the readability of the proofs.

\section{Preliminaries}\label{sec:preliminaries}

We define the Poincar\'e series for $\mathbb{Z}$-graded modules in order to state Lemma \ref{lem:nonzerodiv}. After proving this lemma, we will only consider the Poincar\'e series for an $\NN$-graded ring $R$.

\begin{definition}\label{def:poincare}
    Let $R = \bigoplus_{n \geq 0} R_n$ be a graded ring and $M = \bigoplus_{n \in \ZZ} M_n$ be a graded $R$-module. We assume that $R_0$ is a field $k$ and $M_n = 0$ for all sufficiently small $n \in \ZZ$ and that $\dim_k M_n < \infty$ for all $n \in \ZZ$. Then the Poincar\'e series of $M$ is defined as
    \begin{equation*}
        P_M(t) = \sum_{n \in \ZZ} (\dim_k M_n) t^n.
    \end{equation*}
\end{definition}

In this section, let $R = \bigoplus_{n \geq 0} R_n$ and $M = \bigoplus_{n \in \ZZ} M_n$ be a graded $R$-module as in Definition \ref{def:poincare} unless otherwise stated.

\begin{lemma}\label{lem:nonzerodiv}
    Let $x \in R_h$ be a homogeneous element of degree $h \in \NN$. If $x$ is a non-zero-divisor on $M$, then we have
    \begin{equation*}
        P_{M/xM}(t) = (1 - t^h) P_M(t).
    \end{equation*}
\end{lemma}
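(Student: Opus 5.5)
The plan is to use the short exact sequence of graded $R$-modules given by multiplication by $x$, with a degree shift. Since $x$ is homogeneous of degree $h$, multiplication by $x$ sends $M_{n-h}$ into $M_n$. Writing $M(-h)$ for the shifted module with $M(-h)_n = M_{n-h}$, we get a degree-preserving $R$-linear map $M(-h) \to M$, $m \mapsto xm$. Because $x$ is a non-zero-divisor on $M$, this map is injective, and its cokernel is $M/xM$. So we have an exact sequence of graded $R$-modules
\begin{equation*}
    0 \to M(-h) \xrightarrow{\cdot x} M \to M/xM \to 0 .
\end{equation*}

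Next we check that every module in the sequence satisfies the hypotheses of Definition \ref{def:poincare}, so that each Poincar\'e series is defined. For $M(-h)$, its graded pieces are those of $M$ reindexed, so they are finite-dimensional over $k$ and vanish for all sufficiently small $n$. For $M/xM$, each $(M/xM)_n = M_n/xM_{n-h}$ is a quotient of $M_n$, so the same two properties hold.

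Restricting the sequence to degree $n$ gives an exact sequence of finite-dimensional $k$-vector spaces
\begin{equation*}
    0 \to M_{n-h} \to M_n \to (M/xM)_n \to 0 .
\end{equation*}
Hence $\dim_k (M/xM)_n = \dim_k M_n - \dim_k M_{n-h}$ for every $n \in \ZZ$.

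Finally, multiply by $t^n$ and sum over $n$. This gives
\begin{equation*}
    P_{M/xM}(t) = P_M(t) - \sum_{n} (\dim_k M_{n-h}) t^n = P_M(t) - t^h P_M(t),
\end{equation*}
which is the claim. The only point needing care is that these are formal Laurent series with exponents bounded below. The termwise subtraction and the identity $\sum_n (\dim_k M_{n-h})t^n = t^h P_M(t)$ are therefore legitimate coefficientwise identities, and no convergence issue arises. I do not expect any real obstacle; the main thing to watch is the bookkeeping of the shift by $h$, including the case $h = 0$. In that case $x \in R_0 = k$, a non-zero-divisor is a nonzero scalar, so $xM = M$ and both sides of the identity are $0$.
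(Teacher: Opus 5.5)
Your proposal is correct and follows the paper's own route: the graded short exact sequence $0 \to M(-h) \xrightarrow{\cdot x} M \to M/xM \to 0$ together with $P_{M(-h)}(t) = t^h P_M(t)$. Your degree-wise dimension count, the check that the Poincar\'e series are defined, and the remark on $h = 0$ are details the paper leaves implicit.
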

\begin{proof}
    Since $x$ is a non-zero-divisor on $M$, we have the following short exact sequence of graded $R$-modules:
    \begin{equation*}
        0 \to M(-h) \xrightarrow{\cdot x} M \to M/xM \to 0,
    \end{equation*}
    where $M(-h)$ is the graded module defined by $M(-h)_n = M_{n-h}$ for all $n \in \ZZ$. Taking the Poincar\'e series of this exact sequence, and using the fact that $P_{M(-h)}(t) = t^h P_M(t)$, we get
    \begin{equation*}
        P_M(t) = t^h P_M(t) + P_{M/xM}(t).
    \end{equation*}
    Rearranging this equation gives us the desired result.
\end{proof}

Let $\dim(R)$ be the Krull dimension of $R$ and $\dimgr(R)$ the \emph{graded Krull dimension} of $R$ defined as the supremum of the lengths of chains of homogeneous prime ideals in $R$, i.e.,
\begin{equation*}
    \dimgr(R) = \sup \left\{ r \in \NN \;\middle|\; \begin{array}{l} \text{there exists a chain of homogeneous } \\ \text{prime ideals } \mathfrak{p}_0 \subsetneq \mathfrak{p}_1 \subsetneq \cdots \subsetneq \mathfrak{p}_r \end{array} \right\}.
\end{equation*}
By definition, we have $\dimgr(R) \leq \dim(R)$. When $R$ is a Noetherian $\NN$-graded ring with finite Krull dimension, we have $\dimgr(R) = \dim(R)$; see \cite[Lemma 1.37]{Blu18}. However, in general, we do not have the equality if we do not assume the Noetherian condition (but assume $\NN$-gradedness). 

For a given prime ideal $\mathfrak{p}$ of $R$, we define the homogenization of $\mathfrak{p}$ as the ideal $\mathfrak{p}^*$ generated by the homogeneous elements in $\mathfrak{p}$, i.e.,
\begin{equation*}
    \mathfrak{p}^* = ( x \in R \mid x \text{ is homogeneous and } x \in \mathfrak{p} ).
\end{equation*}
It is well known that $\mathfrak{p}^*$ is a homogeneous prime ideal of $R$ and $\mathfrak{p}^* \subseteq \mathfrak{p}$ holds. Note that $\mathfrak{p}^* = \mathfrak{p}$ if and only if $\mathfrak{p}$ is a homogeneous ideal. 

We shall use the following lemma.

\begin{lemma}\label{lem:laurant}
    Let $R = \bigoplus_{n \in \ZZ} R_n$ be a $\ZZ$-graded ring. The following are equivalent:
    \begin{itemize}
        \item[(1)] $R_0$ is a field and there exists an invertible homogeneous element $T \in R$ such that $R = R_0[T, T^{-1}]$ as a graded ring.
        \item[(2)] All non-zero homogeneous elements of $R$ are invertible.
    \end{itemize}
\end{lemma}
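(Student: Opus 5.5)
The plan is to prove the two implications separately; (1) $\Rightarrow$ (2) is routine and (2) $\Rightarrow$ (1) is where the content lies. For (1) $\Rightarrow$ (2), suppose $R = R_0[T, T^{-1}]$ as a graded ring with $R_0$ a field. I will read ``as a graded ring'' as saying that the monomials $T^n$ are $R_0$-linearly independent, so each nonzero graded piece is one-dimensional. If $T$ has degree $e$, then $R_n = R_0 T^{n/e}$ when $e \mid n$ and $R_n = 0$ otherwise; the degenerate case $e = 0$ gives $R = R_0$ and is trivial. A nonzero homogeneous element therefore has the form $aT^m$ with $a \in R_0 \setminus \{0\}$. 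Its inverse is $a^{-1}T^{-m}$, which exists because $R_0$ is a field and $T$ is a unit.

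For (2) $\Rightarrow$ (1), I will proceed in four steps.
\begin{enumerate}
\item \emph{$R_0$ is a field.} A nonzero $x \in R_0$ has an inverse $y$ in $R$. Comparing degree-$0$ components in $xy = 1$ shows that the degree-$0$ part $y_0$ of $y$ already satisfies $xy_0 = 1$, so $x$ is invertible in $R_0$. In fact $y$ is itself homogeneous of degree $0$, since the inverse of a homogeneous unit is homogeneous.
\item \emph{The support is a subgroup.} Let $H = \{ n \in \ZZ \mid R_n \neq 0 \}$. If $x \in R_m$ and $y \in R_n$ are nonzero, then $xy$ is a unit, hence nonzero, so $m+n \in H$. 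The inverse of a nonzero homogeneous element of degree $m$ is homogeneous of degree $-m$, so $-m \in H$. Thus $H = e\ZZ$ for some $e \geq 0$. If $e = 0$ then $R = R_0$, and I take $T = 1$.
\item \emph{Each nonzero piece is one-dimensional.} If $x, y \in R_n$ are nonzero, then $x^{-1}y \in R_0$, so $y \in R_0 x$. Hence $\dim_{R_0} R_n = 1$ for every $n \in H$.
\item \emph{Identify $R$ with a Laurent polynomial ring.} Choose a nonzero $T \in R_e$. Then $T^m$ is a nonzero element of $R_{me}$ for every $m \in \ZZ$, because $T$ is a unit. By Step~3, $R_{me} = R_0 T^m$, so $R = \bigoplus_m R_0 T^m$. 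The monomials lie in distinct degrees and are nonzero, so they are linearly independent over $R_0$. Therefore the map $R_0[X, X^{-1}] \to R$ sending $X \mapsto T$ is a graded isomorphism, where $X$ is given degree $e$.
\end{enumerate}

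The main obstacle is bookkeeping rather than a genuinely difficult step. First, I need to check that the inverse of a homogeneous unit is homogeneous. To see this, compare components in $xy = 1$ and use that $x$ is a non-zero-divisor. Second, I need to interpret ``$R = R_0[T, T^{-1}]$ as a graded ring'' precisely as the statement that $T$ is algebraically independent over $R_0$. That independence is exactly what the distinct-degree argument in Step~4 supplies.
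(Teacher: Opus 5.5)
Your proposal is correct and follows essentially the same route as the paper. The paper takes the minimal positive degree $n$ with $R_n \neq 0$ and uses the division algorithm (via $aT^{-c}$) to show that every occupied degree is a multiple of $n$ and that $R_{kn} = R_0T^k$; this is your ``support is a subgroup $e\ZZ$'' plus ``graded pieces are one-dimensional'' argument in slightly different packaging, and you additionally make explicit the homogeneity of inverses of homogeneous units and the algebraic independence of $T$, which the paper uses tacitly.
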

\begin{proof}
    (1) $\Rightarrow$ (2): Clear.

    (2) $\Rightarrow$ (1): It is immediate that $R_0$ is a field. If $R_n = 0$ for all $n > 0$, then $R = R_0$, and the assertion is clear. Let $n>0$ be the smallest integer such that $R_n \neq 0$. Take a non-zero homogeneous element $T \in R_n$. Then $T$ is invertible by assumption.

    We claim that if $R_m \neq 0$, then $n$ divides $m$. It is enough to consider the case $m>0$. Indeed, if $m<0$ and $R_m \neq 0$, then for any non-zero homogeneous element $a \in R_m$, its inverse $a^{-1}$ is a non-zero homogeneous element of degree $-m>0$.

    Now assume $m>0$ and $R_m \neq 0$. Write $m=cn+r$ with $c \in \NN$ and $0 \leq r < n$. Take a non-zero homogeneous element $a \in R_m$. Since $T$ is invertible, $aT^{-c}$ is a non-zero homogeneous element of degree $r$. Hence $R_r \neq 0$. By the minimality of $n$, we must have $r=0$. Thus $n$ divides $m$.

    Therefore, if $R_m \neq 0$, then $m=kn$ for some $k \in \ZZ$. For any $a \in R_{kn}$, we have $aT^{-k} \in R_0$, so $a \in R_0T^k$. Hence $R_{kn}=R_0T^k$ for all $k \in \ZZ$. Thus, $R = R_0[T,T^{-1}]$ as a graded ring.
\end{proof}

Lemma \ref{lem:roberts} is found in \cite[Lemma 1]{MR74}. However, their proof relies on the Noetherian condition. Here we give a proof without the Noetherian condition by using Lemma \ref{lem:laurant} and the localization technique.

\begin{lemma}\label{lem:roberts}
    Let $R = \bigoplus_{n \in \NN} R_n$ be an $\NN$-graded ring and $\mathfrak{p} \in \Spec(R)$ be a non-homogeneous prime ideal of $R$. Then we have $\height(\mathfrak{p}/\mathfrak{p}^*) = 1$.
\end{lemma}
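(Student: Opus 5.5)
First we reduce to the case $\mathfrak{p}^* = 0$. Pass to $\bar R = R/\mathfrak{p}^*$, which is an $\NN$-graded domain because $\mathfrak{p}^*$ is a homogeneous prime. The ideal $\bar{\mathfrak{p}} = \mathfrak{p}/\mathfrak{p}^*$ is prime and contains no nonzero homogeneous element. Primes of $R$ between $\mathfrak{p}^*$ and $\mathfrak{p}$ correspond to primes of $\bar R$ contained in $\bar{\mathfrak{p}}$. So it suffices to show: if $R$ is an $\NN$-graded domain and $\mathfrak{p}$ is a nonzero prime containing no nonzero homogeneous element, then $\height(\mathfrak{p}) = 1$. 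Since $\mathfrak{p} \neq 0$, we already have $\height(\mathfrak{p}) \geq 1$, and only the upper bound needs proof.

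Next, localize. Let $W$ be the multiplicative set of nonzero homogeneous elements of $R$; it is multiplicative because $R$ is a domain. Since $\mathfrak{p} \cap W = \emptyset$, and every prime $\mathfrak{q} \subseteq \mathfrak{p}$ also misses $W$, the chains of primes inside $\mathfrak{p}$ are the same as those inside $W^{-1}\mathfrak{p}$ in $W^{-1}R$. Hence $\height(\mathfrak{p}) = \height(W^{-1}\mathfrak{p})$. The ring $W^{-1}R$ is $\ZZ$-graded, and every nonzero homogeneous element in it is invertible. By Lemma \ref{lem:laurant}, $W^{-1}R = K[T, T^{-1}]$ with $K$ a field (the degree-zero part), or else $W^{-1}R = K$.

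The case $W^{-1}R = K$ cannot occur: then $W^{-1}R$ is a field, so its only prime is $0$, which contradicts $W^{-1}\mathfrak{p} \neq 0$. In the case $K[T,T^{-1}]$ with $T$ of positive degree, $T$ is transcendental over $K$. This holds because distinct powers of $T$ lie in distinct graded pieces, so they are $K$-linearly independent. Then $K[T,T^{-1}]$ is a localization of the PID $K[T]$, hence a one-dimensional Noetherian domain. So $\height(W^{-1}\mathfrak{p}) \leq 1$, and therefore $\height(\mathfrak{p}/\mathfrak{p}^*) = 1$.

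The main point requiring care is the correspondence between primes contained in $\mathfrak{p}$ and primes of $W^{-1}R$ contained in $W^{-1}\mathfrak{p}$. It rests only on the fact that any prime $\mathfrak{q} \subseteq \mathfrak{p}$ avoids $W$, which is automatic because $\mathfrak{p}$ contains no nonzero homogeneous element. This step uses no Noetherian hypothesis, which is exactly why the argument works in the non-Noetherian setting of the paper.
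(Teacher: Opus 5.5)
Your proof is correct and takes essentially the same route as the paper. The paper localizes at the homogeneous elements outside $\mathfrak{p}$ and then passes to $S^{-1}R/S^{-1}\mathfrak{p}^*$, which is exactly your $W^{-1}(R/\mathfrak{p}^*)$, and then applies Lemma~\ref{lem:laurant} to get a Laurent polynomial ring of dimension $1$; your version is, if anything, more explicit in ruling out the field case and in checking that $T$ is transcendental over $K$.
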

\begin{proof}
    Let $S$ be the set of all homogeneous elements of $R$ which are not in $\mathfrak{p}$, i.e., 
    \begin{equation*}
        S = \{ x \in R \mid x \text{ is homogeneous and } x \notin \mathfrak{p} \}.
    \end{equation*}
    Then $S$ is a multiplicatively closed set. We can see $S^{-1}\mathfrak{p}$ and $S^{-1}\mathfrak{p}^*$ are prime ideals of the $\ZZ$-graded ring $S^{-1}R$ and $S^{-1}\mathfrak{p}^*$ is a homogeneous prime ideal of $S^{-1}R$. Moreover, we can see that $S^{-1}R/S^{-1}\mathfrak{p}^*$ satisfies the condition (2) in Lemma \ref{lem:laurant}. Hence, we have that $S^{-1}R/S^{-1}\mathfrak{p}^*$ has Krull dimension 1 (since it is not a field). Then we have
    \begin{equation*}
        \height(\mathfrak{p}/\mathfrak{p}^*) = \height(S^{-1}\mathfrak{p}/S^{-1}\mathfrak{p}^*) = 1.
    \end{equation*}
\end{proof}

Our last preliminary in this section is the following proposition. We have $\dimgr(R) \leq \dim(R)$ and the equality does not hold in general. However, can the difference between these two dimensions get arbitrarily large? The answer is no. 

\begin{proposition}\label{prop:dimupper}
    Let $R = \bigoplus_{n \in \NN} R_n$ be an $\NN$-graded ring with $R_0$ a field. Then, if there is a chain of prime ideals of $R$ of length $r$, then there is a chain of homogeneous prime ideals of $R$ of length at least $\left\lfloor \frac{r}{2} \right\rfloor$, where $\left\lfloor \cdot \right\rfloor$ denotes the floor function. In particular, we have 
    \begin{equation*}
        \left\lfloor \frac{\dim(R)}{2} \right\rfloor \leq \dimgr(R) \leq \dim(R), 
    \end{equation*}
    including the case $\dim(R) = \infty$. 
\end{proposition}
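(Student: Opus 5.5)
Given a chain $\mathfrak{p}_0 \subsetneq \mathfrak{p}_1 \subsetneq \cdots \subsetneq \mathfrak{p}_r$ of primes of $R$, I would pass to the homogenizations $\mathfrak{p}_0^* \subseteq \mathfrak{p}_1^* \subseteq \cdots \subseteq \mathfrak{p}_r^*$. These form a chain of homogeneous primes, though with possible repetitions. The key claim is that the homogenization cannot stall for two consecutive steps: if $i+2 \le r$, then $\mathfrak{p}_i^* \subsetneq \mathfrak{p}_{i+2}^*$. Granting this, the subsequence $\mathfrak{p}_0^* \subsetneq \mathfrak{p}_2^* \subsetneq \mathfrak{p}_4^* \subsetneq \cdots$ is a chain of homogeneous primes of length $\lfloor r/2 \rfloor$. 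This gives the first assertion.

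To prove the claim, suppose instead that $\mathfrak{p}_i^* = \mathfrak{p}_{i+2}^* =: \mathfrak{q}$. Then $\mathfrak{q} = \mathfrak{p}_i^* \subseteq \mathfrak{p}_i \subsetneq \mathfrak{p}_{i+1} \subsetneq \mathfrak{p}_{i+2}$. In particular $\mathfrak{p}_{i+2} \neq \mathfrak{p}_{i+2}^*$, so $\mathfrak{p}_{i+2}$ is non-homogeneous. Lemma \ref{lem:roberts} then gives $\height(\mathfrak{p}_{i+2}/\mathfrak{q}) = 1$. But the chain $\mathfrak{q} \subseteq \mathfrak{p}_i \subsetneq \mathfrak{p}_{i+1} \subsetneq \mathfrak{p}_{i+2}$ shows this height is at least $2$, a contradiction. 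Note that $\mathfrak{q}$ is prime, as recalled before Lemma \ref{lem:laurant}, and Lemma \ref{lem:roberts} needs no Noetherian hypothesis. The hypothesis that $R_0$ is a field is not even needed for this step.

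For the ``in particular'' statement, the upper bound $\dimgr(R) \le \dim(R)$ holds by definition. For the lower bound with $\dim(R) < \infty$, apply the first assertion to a chain of length $\dim(R)$. If $\dim(R) = \infty$, there are chains of every finite length $r$, hence homogeneous chains of length at least $\lfloor r/2 \rfloor$ for every $r$, so $\dimgr(R) = \infty$.

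The only real content is the non-stalling claim, and it reduces immediately to Lemma \ref{lem:roberts}. I do not expect any serious obstacle. The one point to check is that $\height(\mathfrak{p}/\mathfrak{p}^*)$ in Lemma \ref{lem:roberts} means the height of $\mathfrak{p}/\mathfrak{p}^*$ in $R/\mathfrak{p}^*$, that is, the supremum of lengths of prime chains from $\mathfrak{p}^*$ up to $\mathfrak{p}$. Under that reading the length-$2$ chain above gives the contradiction directly.
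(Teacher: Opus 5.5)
Your proposal is correct and follows essentially the same route as the paper: both rule out two consecutive equalities in the homogenized chain $\mathfrak{p}_0^* \subseteq \cdots \subseteq \mathfrak{p}_r^*$. Both do so via Lemma \ref{lem:roberts} applied to the chain $\mathfrak{p}_i \subsetneq \mathfrak{p}_{i+1} \subsetneq \mathfrak{p}_{i+2}$ lying over $\mathfrak{p}_{i+2}^*$. Your explicit choice of the even-indexed subsequence $\mathfrak{p}_0^* \subsetneq \mathfrak{p}_2^* \subsetneq \cdots$ is just a slightly cleaner way of counting the strict inclusions than the paper's phrasing.
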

\begin{proof}
    The case $r = 0, 1$ is clear since $R_+ = \bigoplus_{n > 0} R_n$ is the maximal homogeneous ideal of $R$. We assume $r > 1$. If we have a chain of prime ideals $\mathfrak{p}_0 \subsetneq \mathfrak{p}_1 \subsetneq \mathfrak{p}_2$ of length 2, we have $\mathfrak{p}_0^* \subseteq \mathfrak{p}_1^* \subseteq \mathfrak{p}_2^*$ as homogeneous prime ideals. However, it is impossible that $\mathfrak{p}_0^* = \mathfrak{p}_1^* = \mathfrak{p}_2^*$ holds. If so, we have
    \begin{equation*}
        \mathfrak{p}_2^* \subseteq \mathfrak{p}_0 \subsetneq \mathfrak{p}_1 \subsetneq \mathfrak{p}_2.
    \end{equation*}
    This means $\height(\mathfrak{p}_2/\mathfrak{p}_2^*) \geq 2$, which contradicts Lemma \ref{lem:roberts}. Hence, we have at least one strict inclusion in the chain $\mathfrak{p}_0^* \subseteq \mathfrak{p}_1^* \subseteq \mathfrak{p}_2^*$. Then, if we take a chain of prime ideals $\mathfrak{p}_0 \subsetneq \mathfrak{p}_1 \subsetneq \cdots \subsetneq \mathfrak{p}_r$ of length $r$, we have $\mathfrak{p}_0^* \subseteq \mathfrak{p}_1^* \subseteq \cdots \subseteq \mathfrak{p}_r^*$ as a chain of homogeneous prime ideals. These inclusions can be strict or not. However, two inclusions next to each other cannot be both equal. Hence, we have at least $\left\lfloor \frac{r}{2} \right\rfloor$ strict inclusions in that chain. This means we have a chain of homogeneous prime ideals of length at least $\left\lfloor \frac{r}{2} \right\rfloor$.
\end{proof}

Note that using the above proposition, we can conclude that $\dim(R) < \infty$ if and only if $\dimgr(R) < \infty$ for an $\NN$-graded ring $R$ with $R_0$ a field.

\section{Hilbert--Serre Rings}\label{sec:hilbertserre}

In general, the Poincar\'e series of graded rings do not necessarily have a well-defined pole order at $t = 1$. If $R$ is Noetherian, the Poincar\'e series of $R$ has a well-defined pole order at $t=1$ by the Hilbert--Serre theorem; see \cite[Theorem 11.1]{AM69}. However, the converse does not hold, i.e., there are non-Noetherian graded rings whose Poincar\'e series has a well-defined pole order at $t = 1$. Therefore, it is worth defining the following class of graded rings.

\begin{definition}\label{def:hilbertserre}
    Let $R = \bigoplus_{n \geq 0} R_n$ be an $\NN$-graded ring with $R_0 = k$ a field. We say $R$ is a \emph{Hilbert--Serre ring} if $R$ satisfies the following conditions:
    \begin{itemize}
        \item[(1)] $\dim_k R_n < \infty$ for all $n \in \NN$.
        \item[(2)] The Poincar\'e series $P_R(t)$ of $R$ has radius of convergence 1 or $\infty$. 
        \item[(3)] $D := \left\{ d \in \NN \mid \lim_{t \to 1-0} (1-t)^d P_R(t) < \infty \right\}$ is not empty.
    \end{itemize}
    We define $d(R)$ as the minimum of $D$, i.e., 
    \begin{equation*}
        d(R) = \min \left\{ d \in \NN \;\middle|\; \lim_{t \to 1-0} (1-t)^d P_R(t) < \infty \right\}.
    \end{equation*}
    We call $d(R)$ the \emph{Hilbert--Serre dimension} of $R$.
\end{definition}

The name of this class of graded rings comes from the Hilbert--Serre theorem, which states that if $R$ is a Noetherian $\NN$-graded ring, then $P_R(t)$ is a rational function of the form $f(t)/\prod_{i=1}^d (1-t)^{\lambda_i}$, where $f(t) \in \ZZ[t]$. Thus, every Noetherian $\NN$-graded ring is a Hilbert--Serre ring. Here are some examples and non-examples.

\begin{example}\label{ex:hilbertserre}
    Let $R = \bigoplus_{n \geq 0} R_n$ be an $\NN$-graded ring with $R_0 = k$ a field. 
    \begin{itemize}
        \item[(1)] If $R$ is a Noetherian $\NN$-graded ring, then $R$ is a Hilbert--Serre ring and $d(R) = \dim(R)$ by Smoke's dimension theorem; see \cite[Theorem 5.5]{Smo72}. Note that $\dimgr(R) = \dim(R)$ holds if $R$ is a Noetherian $\NN$-graded ring. 
        \item[(2)] $R = k[x, xy, xy^2, xy^3, \ldots] \subset k[x, y]$ is a non-Noetherian but Hilbert--Serre ring. Indeed, we have 
            \begin{equation*}
                P_R(t) = 1 + \sum_{n=1}^{\infty} nt^n = 1 + t + 2t^2 + 3t^3 + \cdots = \frac{t^2-t+1}{(1-t)^2}.
            \end{equation*}
            In this case, we have $\dimgr(R) = \dim(R) = d(R) = 2$. We provide more details on this example in Example \ref{ex:1} in the later section.
        \item[(3)] $R = k[xy, xy^2, xy^3, \ldots] \subset k[x, y]$ is a non-Noetherian but Hilbert--Serre ring. Indeed, we have 
            \begin{equation*}
                P_R(t) = 1 + \sum_{n=1}^{\infty} \left\lfloor \frac{n}{2} \right\rfloor t^n = 1 + t^2 + t^3 + 2t^4 + 2t^5 + \cdots = \frac{t^4+t^3-t^2+1}{(1-t^2)^2}.
            \end{equation*}
            In this case, we have $\dimgr(R) = \dim(R) = d(R) = 2$. We provide more details on this example in Example \ref{ex:2} in the later section.
        \item[(4)] $R = k[x_1, x_2, x_3, \ldots]$, a polynomial ring in infinitely many variables, is a non-Noetherian and non-Hilbert--Serre ring because $\dim_k(R_1) = \infty$. In this case, we cannot even define the Poincar\'e series of $R$.
        \item[(5)] $R = k[x_1, x_2^2, x_3^3, \ldots]$ is a non-Noetherian and non-Hilbert--Serre ring. A Hilbert--Serre ring actually has finite Krull dimension (Corollary \ref{cor:finitedimension}). So we have that this $R$ is not a Hilbert--Serre ring. However, unlike the previous example, we can define the Poincar\'e series of $R$ as 
            \begin{equation*}
                P_R(t) = 1 + t + 2t^2 + 3t^3 + 5t^4 + 7t^5 + 11t^6 + \cdots.
            \end{equation*}
            In other words, the Poincar\'e series is the generating function of the partition function $p(n)$. In this case, we have $\dimgr(R) = \dim(R) = \infty$. For precise details on this example, see Example \ref{ex:6} in a later section.
    \end{itemize}
\end{example}

By straightforward observations, we have the following properties of Hilbert--Serre rings.

\begin{proposition}\label{prop:hilbertserre}
    Let $R = \bigoplus_{n \geq 0} R_n$ be an $\NN$-graded ring with $R_0 = k$ a field and $R$ a Hilbert--Serre ring.
    \begin{itemize}
        \item[(1)] If $I$ is a homogeneous ideal of $R$, then $R/I$ is also a Hilbert--Serre ring and we have $d(R/I) \leq d(R)$.
        \item[(2)] If $x \in R_h$ is a homogeneous non-zero-divisor and non-unit, i.e., an $R$-regular element of $R$, then $R/xR$ is also a Hilbert--Serre ring and we have $d(R/xR) = d(R) - 1$.
        \item[(3)] If $S = \bigoplus_{n \geq 0} S_n$ is an $\NN$-graded ring with $S_0 = R_0 = k$ a field and $\phi: S \to R$ is a homogeneous injective ring homomorphism, then $S$ is also a Hilbert--Serre ring and we have $d(S) \leq d(R)$.
    \end{itemize}
\end{proposition}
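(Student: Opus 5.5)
The three parts share one mechanism. Each quotient or subring has coefficientwise dominated Hilbert function, so radius of convergence and pole order can be read off from a comparison of nonnegative power series. We first isolate an elementary comparison principle. Let $P(t)=\sum a_n t^n$ and $Q(t)=\sum b_n t^n$ with $0 \leq b_n \leq a_n$, and suppose $P$ has radius of convergence $1$ or $\infty$. Then $Q$ converges on $[0,1)$, so its radius is at least $1$. Either the radius is $\geq 1$ and finite, and then by Cauchy--Hadamard applied to integer coefficients it is exactly $1$ or the series is a polynomial, or $Q$ is a polynomial with radius $\infty$. More concretely, nonzero integer coefficients infinitely often force $\limsup |b_n|^{1/n} \geq 1$, so the radius is exactly $1$; otherwise it is $\infty$. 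Also $(1-t)^d Q(t) \leq (1-t)^d P(t)$ for $t\in[0,1)$, so finiteness of $\lim_{t\to 1-0}(1-t)^dP(t)$ bounds $\limsup_{t\to1-0}(1-t)^dQ(t)$.

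A subtlety is that the definition uses $\lim$, not $\limsup$. We will read ``$\lim<\infty$'' as boundedness near $1$, which is what the definition intends. Under that reading, boundedness by $(1-t)^dP(t)$ gives $D_Q \supseteq D_P$, so $D_Q\neq\emptyset$ and $\min D_Q \leq \min D_P$. If an actual limit is needed, one can instead use $(1-t)^{d+1}$ and note that a bounded function times $(1-t)$ tends to $0$. This would cost $1$ in the inequality, so we prefer the boundedness reading.

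\textbf{Part (1).} Condition (1) holds because $(R/I)_n=R_n/I_n$. The coefficients satisfy $0\le\dim_k(R/I)_n\le\dim_k R_n$, and $(R/I)_0=k$ provided $I\subseteq R_+$. If $I$ is not contained in $R_+$, then $I=R$ and $R/I=0$; we exclude this degenerate case or treat it as trivial. The comparison principle then gives (2), (3) and $d(R/I)\le d(R)$.

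\textbf{Part (3).} Since $\phi$ is homogeneous and injective, $\dim_k S_n\le \dim_k R_n$. Here we need $\phi$ to be $k$-linear, which holds because $\phi$ restricts to the identification $S_0=R_0=k$. The comparison principle applies verbatim.

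\textbf{Part (2).} Lemma \ref{lem:nonzerodiv} gives $P_{R/xR}(t)=(1-t^h)P_R(t)$ with $h\ge1$, since $x$ is a non-unit and $R_0$ is a field. Conditions (1) and (2) follow from part (1). For the pole order, write $(1-t^h)=(1-t)(1+t+\dots+t^{h-1})$, where the second factor tends to $h>0$. Hence $(1-t)^{e}P_{R/xR}(t)$ behaves like $h(1-t)^{e+1}P_R(t)$ as $t\to1-0$, and so $e\in D_{R/xR}$ if and only if $e+1\in D_R$. This yields $d(R/xR)=d(R)-1$, provided $d(R)\ge1$.

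The main obstacle is exactly this edge case and the $\lim$-versus-boundedness issue. If $d(R)=0$, then $P_R(1)<\infty$ and $R$ is finite-dimensional over $k$. In that case $x\in R_+$ is nilpotent, which contradicts $x$ being a non-zero-divisor, since $R\neq0$. So $d(R)\ge1$. For the boundedness issue, positivity of all quantities makes ``finite limit'' and ``bounded'' interchangeable for monotone comparisons, and we would state this convention explicitly at the start of the proof.
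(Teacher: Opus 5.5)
Your proof is correct and takes the paper's route: coefficientwise domination of nonnegative power series gives (1) and (3), and Lemma \ref{lem:nonzerodiv} together with $1-t^h=(1-t)(1+t+\cdots+t^{h-1})$ gives (2). The paper leaves your extra checks implicit (radius of convergence, the degenerate case $I=R$, $k$-linearity of $\phi$, and $d(R)\geq 1$ in (2)), and you handle them correctly; your boundedness reading of ``$\lim<\infty$'' is also exactly what the paper's one-line inequality $\lim(1-t)^d f(t)\leq\lim(1-t)^d g(t)$ tacitly needs. However, your closing claim that positivity makes ``bounded'' and ``finite limit'' interchangeable is false: for $A\subseteq\NN$ without natural density, $(1-t)\sum_{n\in A}t^n$ is bounded by $1$ but has no limit as $t\to 1-0$, so the convention must be adopted, as you first said, rather than derived.
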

\begin{proof}
    In general, let
    \begin{equation*}
        \begin{split}
            f(t) &= a_0 + a_1 t + a_2 t^2 + \cdots, \\ 
            g(t) &= b_0 + b_1 t + b_2 t^2 + \cdots
        \end{split}
    \end{equation*}
    be two power series with non-negative coefficients and $a_n \leq b_n$ for all $n \in \NN$. If there exists $d \in \NN$ such that $\lim_{t \to 1-0} (1-t)^d g(t) < \infty$, then we have:
    \begin{equation*}
        \lim_{t \to 1-0} (1-t)^d f(t) \leq \lim_{t \to 1-0} (1-t)^d g(t) < \infty.
    \end{equation*}
    This means that (1) and (3) hold. (2) is immediate from Lemma \ref{lem:nonzerodiv}. 
\end{proof}

By the above proposition, we find that there are more Hilbert--Serre rings than Noetherian $\NN$-graded rings. For example, a subring $S$ of a polynomial ring $R = k[x_1, \ldots, x_n]$ is a Hilbert--Serre domain if the natural inclusion $S \hookrightarrow R$ is a homogeneous injective ring homomorphism even if $S$ is not Noetherian. 

The following lemma is a first non-trivial property of Hilbert--Serre rings. This is a weak version of our main Theorem \ref{th:main}. However, we need it to claim that Hilbert--Serre rings have finite Krull dimension (Corollary \ref{cor:finitedimension}). A similar argument appears in \cite[Proposition 11.10]{AM69}. 

\begin{lemma}\label{lem:finitegrdim}
    Let $R = \bigoplus_{n \geq 0} R_n$ be an $\NN$-graded ring with $R_0 = k$ a field. If $R$ is a Hilbert--Serre ring, then $\dimgr(R) \leq d(R)$.
\end{lemma}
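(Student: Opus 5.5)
Induction on $d(R)$, following \cite[Proposition 11.10]{AM69}: pick a homogeneous element that cuts the chain down by one step and drops the pole order by at least one. Concretely, I will prove that for any chain of homogeneous primes $\mathfrak{p}_0 \subsetneq \mathfrak{p}_1 \subsetneq \cdots \subsetneq \mathfrak{p}_r$ in a Hilbert--Serre ring $R$, we have $r \leq d(R)$.

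First I reduce to a domain. Replace $R$ by $R/\mathfrak{p}_0$. By Proposition \ref{prop:hilbertserre}(1) it is again a Hilbert--Serre ring with $d(R/\mathfrak{p}_0) \leq d(R)$, and the chain descends to a chain of homogeneous primes of the same length starting at $0$. So we may assume $R$ is a graded domain and $\mathfrak{p}_0 = 0$.

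Base case $d(R) = 0$. Then $\lim_{t\to 1-0} P_R(t) < \infty$. The coefficients are nonnegative, so $\sum_n \dim_k R_n < \infty$ and $R$ is finite-dimensional over $k$. A domain finite-dimensional over a field is a field, so $r = 0$. More generally, if $r \geq 1$, then $\mathfrak{p}_1 \neq 0$ is homogeneous, so some nonzero homogeneous $x \in \mathfrak{p}_1$ has positive degree (it cannot have degree $0$, since $R_0 = k$ consists of units).

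Inductive step. Take such an $x \in \mathfrak{p}_1$ of degree $h > 0$. Since $R$ is a domain, $x$ is a non-zero-divisor, and it is a non-unit because it lies in $R_+$. By Proposition \ref{prop:hilbertserre}(2), $R/xR$ is Hilbert--Serre with $d(R/xR) = d(R) - 1$. The chain $\mathfrak{p}_1/xR \subsetneq \cdots \subsetneq \mathfrak{p}_r/xR$ is a chain of homogeneous primes of length $r-1$ in $R/xR$. Applying the induction hypothesis, in the form of the statement for all Hilbert--Serre rings with smaller $d$ (after the reduction to the domain $(R/xR)/(\mathfrak{p}_1/xR)$ again), gives $r - 1 \leq d(R) - 1$. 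Taking the supremum over chains yields $\dimgr(R) \leq d(R)$.

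The main point needing care is Proposition \ref{prop:hilbertserre}(2), namely that the pole order drops by exactly one, or at least by one. From Lemma \ref{lem:nonzerodiv}, $P_{R/xR}(t) = (1-t^h)P_R(t) = (1-t)(1+t+\cdots+t^{h-1})P_R(t)$. The factor $1+\cdots+t^{h-1}$ tends to $h > 0$ as $t \to 1$, so finiteness of $\lim (1-t)^{d-1}P_{R/xR}$ follows from finiteness of $\lim (1-t)^d P_R$. This gives $d(R/xR) \leq d(R) - 1$, which is all the induction needs. Since that proposition is already available, the remaining work is just the bookkeeping above: noting that $d(R) \geq 1$ whenever $r \geq 1$, because then $R$ is infinite-dimensional over $k$.
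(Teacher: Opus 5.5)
Your proposal is correct and takes essentially the same route as the paper: induction on $d(R)$, passing to $R/\mathfrak{p}_0$, choosing a homogeneous $x \in \mathfrak{p}_1 \setminus \mathfrak{p}_0$ (necessarily of positive degree), and using Proposition~\ref{prop:hilbertserre}(2) to lower the pole order by one. The only cosmetic difference is the base case: the paper argues that $R_+$ is nilpotent and hence is the only prime, whereas you first reduce to a domain and then use that a finite-dimensional domain over $k$ is a field.
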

\begin{proof}
    We show the claim by induction on $d = d(R)$. 
    
    In the case $d = 0$, we have to show $\dimgr(R) = 0$. Since the Poincar\'e series $P_R(t)$ does not have a pole at $t=1$, $R_n = 0$ for all sufficiently large $n \in \NN$. So we can find $m \in \NN$ such that $R_+^m = 0$, where $R_+ = \bigoplus_{n > 0} R_n$, the maximal homogeneous ideal of $R$. If we take an arbitrary prime ideal $\mathfrak{p}$ of $R$, we have $R_+^m \subseteq \mathfrak{p}$. This means that $\mathfrak{p} = R_+$. Hence, we have $\dimgr(R) = 0$.

    We assume $d > 0$ and that the claim holds for $d-1$. We assume that $\mathfrak{p}_0 \subsetneq \cdots \subsetneq \mathfrak{p}_r$ is a chain of homogeneous prime ideals of $R$ with length $r$. We can take a homogeneous element $x \in \mathfrak{p}_1 \setminus \mathfrak{p}_0$ since $\mathfrak{p}_0 \subsetneq \mathfrak{p}_1$. We put $R' = R/\mathfrak{p}_0$ and $x'$ the image of $x$ in $R'$. Then, $R'$ is also a Hilbert--Serre domain by Proposition \ref{prop:hilbertserre} (1) and $x'$ is a homogeneous non-zero-divisor and non-unit on $R'$. Moreover, there is at least a chain of homogeneous prime ideals of $R'/x'R'$ of length $r-1$ which is the image of the chain $\mathfrak{p}_1/\mathfrak{p}_0 \subsetneq \cdots \subsetneq \mathfrak{p}_r/\mathfrak{p}_0$ in $R'$. Hence, by the induction hypothesis and Proposition \ref{prop:hilbertserre} (2), we have
    \begin{equation*}
        r-1 \leq \dimgr(R'/x'R') \leq d(R'/x'R') = d(R') - 1 \leq d(R) - 1.
    \end{equation*}
    This means $r \leq d(R)$ and we have $\dimgr(R) \leq d(R)$.
\end{proof}

\begin{corollary}\label{cor:finitedimension}
    Let $R = \bigoplus_{n \geq 0} R_n$ be an $\NN$-graded ring with $R_0 = k$ a field. If $R$ is a Hilbert--Serre ring, then $\dim(R) < \infty$.
\end{corollary}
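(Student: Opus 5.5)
The plan is to combine Lemma \ref{lem:finitegrdim} with Proposition \ref{prop:dimupper}. Since $R$ is a Hilbert--Serre ring, $d(R)$ is a well-defined natural number. Lemma \ref{lem:finitegrdim} then gives the bound $\dimgr(R) \leq d(R) < \infty$. So the graded Krull dimension is finite, and it remains to transfer this finiteness to the ordinary Krull dimension.

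For the transfer we use Proposition \ref{prop:dimupper}, which applies because $R$ is $\NN$-graded with $R_0 = k$ a field. Suppose $R$ had a chain of prime ideals of length $r$. The proposition produces a chain of homogeneous prime ideals of length at least $\lfloor r/2 \rfloor$, so $\lfloor r/2 \rfloor \leq \dimgr(R) \leq d(R)$. This forces
\begin{equation*}
    r \leq 2\,d(R) + 1
\end{equation*}
for every chain, and hence $\dim(R) \leq 2d(R)+1 < \infty$.

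The proposition explicitly covers the case $\dim(R) = \infty$. It would give $\dimgr(R) = \infty$, which contradicts the previous step, so no separate argument is needed there.

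There is no real obstacle: the substantive work was done in Lemma \ref{lem:roberts}, where we showed $\height(\mathfrak{p}/\mathfrak{p}^*) = 1$ without any Noetherian hypothesis, and in the induction of Lemma \ref{lem:finitegrdim}. The only point to check is that both cited results need only the standing hypotheses ($\NN$-graded, $R_0$ a field, Hilbert--Serre), which they do.
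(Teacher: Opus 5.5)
Your proposal is correct and follows the paper's proof, which also combines Proposition~\ref{prop:dimupper} with Lemma~\ref{lem:finitegrdim}. The only difference is phrasing: the paper argues by contradiction (arbitrarily long chains would force $\dimgr(R)=\infty$), while you derive the explicit bound $\dim(R)\le 2d(R)+1$, which is the same argument made quantitative.
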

\begin{proof}
    If not, we can find for any $r \in \NN$ a chain of prime ideals of $R$ of length $r$. By Proposition \ref{prop:dimupper}, we have $\left\lfloor \frac{r}{2} \right\rfloor \leq \dimgr(R)$. Since $r$ can be arbitrarily large, we have $\dimgr(R) = \infty$, which contradicts Lemma \ref{lem:finitegrdim}.
\end{proof}

\section{Dimension Inequalities for Hilbert--Serre Rings}\label{sec:dimineq}

In this section, first we show that for a Hilbert--Serre domain, $\dimgr(R) \leq \dim(R) \leq \trdeg_k(R) \leq d(R)$ holds, where $R = \bigoplus_{n \geq 0} R_n$ is an $\NN$-graded ring with $R_0 = k$ a field and $\trdeg_k(R)$ is the transcendence degree of $R$ over $k$. Then we show that for a Hilbert--Serre ring (not necessarily a domain), we have $\dimgr(R) \leq \dim(R) \leq d(R)$. Second, we review the definition of the Gelfand-Kirillov dimension (GKdim). We then examine relations between GKdim and the other dimensions. Finally, we derive the main Theorem \ref{th:main} by combining these results.

\begin{lemma}\label{lem:dimtrdeg}
    Let $R$ be a domain containing a field $k$ as a subring. Then we have $\dim(R) \leq \trdeg_k(R)$.
\end{lemma}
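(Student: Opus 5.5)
Without loss of generality we may assume $n := \trdeg_k(R) < \infty$, since otherwise there is nothing to prove. The plan is to show that every chain of primes $\mathfrak{p}_0 \subsetneq \mathfrak{p}_1 \subsetneq \cdots \subsetneq \mathfrak{p}_r$ in $R$ has length $r \leq n$. The key input is the standard fact that a proper inclusion of primes strictly drops transcendence degree of the residue domains. Precisely: if $\mathfrak{p} \subsetneq \mathfrak{q}$ are primes of a $k$-algebra $A$ with $\trdeg_k(A/\mathfrak{p}) < \infty$, then
\begin{equation*}
\trdeg_k(A/\mathfrak{q}) < \trdeg_k(A/\mathfrak{p}).
\end{equation*}
Applying this along the chain gives $\trdeg_k(R/\mathfrak{p}_r) \leq \trdeg_k(R/\mathfrak{p}_0) - r \leq \trdeg_k(R) - r$, so $r \leq n$. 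For the last step we use that $R/\mathfrak{p}_0$ is a quotient of $R$, so any algebraically independent elements of $R/\mathfrak{p}_0$ lift to algebraically independent elements of $R$. Taking the supremum over chains yields $\dim(R) \leq \trdeg_k(R)$.

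The main step is the strict drop. Replace $A$ by $B = A/\mathfrak{p}$, a domain, with nonzero prime $Q = \mathfrak{q}/\mathfrak{p}$. The inequality $\trdeg_k(B/Q) \leq \trdeg_k(B)$ is immediate by the same lifting argument. For strictness, suppose $\bar b_1, \ldots, \bar b_m \in B/Q$ are algebraically independent over $k$ with $m = \trdeg_k(B)$, and lift them to $b_1, \ldots, b_m \in B$. These lifts are algebraically independent, since a relation among them would reduce to one among the $\bar b_i$.

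Now pick $0 \neq y \in Q$. Since $\trdeg_k(B) = m$, the element $y$ is algebraic over $k(b_1, \ldots, b_m)$. Clearing denominators, there is a relation $\sum_{j=0}^{s} c_j(b) y^j = 0$ with $c_j \in k[X_1, \ldots, X_m]$ not all zero, and we choose one with $s$ minimal. Because $B$ is a domain and $y \neq 0$, we can factor out powers of $y$, so minimality forces $c_0(b) \neq 0$. More precisely, $c_0$ is a nonzero polynomial, for otherwise $y \cdot (\text{lower relation}) = 0$ and the lower relation would contradict the minimality of $s$.

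Reducing the relation modulo $Q$ and using $y \in Q$ gives $c_0(\bar b_1, \ldots, \bar b_m) = 0$ with $c_0 \neq 0$. This contradicts the algebraic independence of the $\bar b_i$. Hence $\trdeg_k(B/Q) < m$.

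The only point requiring care is the minimality argument that makes the constant term $c_0$ a nonzero polynomial; everything else is routine. Note also that no grading or Hilbert--Serre hypothesis is needed, which is consistent with the generality of the statement.
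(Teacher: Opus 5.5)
Your proof is correct, but it takes a different route from the paper.

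The paper picks $x_i \in \mathfrak{p}_i \setminus \mathfrak{p}_{i-1}$ and contracts the chain to the finitely generated domain $S = k[x_1, \ldots, x_r]$, where it remains strict. It then quotes the theorem $\dim(S) = \trdeg_k(S)$ for finitely generated domains over $k$, which rests on Noether normalization, and concludes $r \le \dim(S) = \trdeg_k(S) \le \trdeg_k(R)$.

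You instead prove directly that $\trdeg_k$ strictly drops along every proper inclusion of primes, using only the definition of transcendence degree and the domain property. Your minimal-degree argument forcing $c_0 \neq 0$ is sound.

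The paper's route is shorter. It also fits its recurring theme of reducing to finitely generated subalgebras, as in Lemma~\ref{lem:trdeggkdim} and the Gelfand--Kirillov arguments. However, it depends on the full dimension theory of finitely generated $k$-algebras.

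Your route is self-contained and gives slightly more with no extra work. Applying the strict drop along a chain ending at $\mathfrak{p}$ yields $\height(\mathfrak{p}) + \trdeg_k(R/\mathfrak{p}) \le \trdeg_k(R)$ for every prime $\mathfrak{p}$.

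One step you use implicitly deserves a sentence: the transcendence degree of a domain $D$ is attained by elements of $D$ itself, since a transcendence basis of $\Frac(D)$ over $k$ can be chosen inside the generating set $D$. You need this when you choose $\bar b_i \in B/Q$, and again when you lift algebraically independent elements from $R/\mathfrak{p}_0$ to $R$.
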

\begin{proof}
    Note that if $R$ is a finitely generated algebra over $k$, then the equality holds; see \cite[Theorem 5.6]{Mat86}.
    
    We take a chain of prime ideals $\mathfrak{p}_0 \subsetneq \cdots \subsetneq \mathfrak{p}_r$ of $R$ with length $r$. We take $x_i \in \mathfrak{p}_i \setminus \mathfrak{p}_{i-1}$ for each $i = 1, \ldots, r$. We put $S = k[x_1, \ldots, x_r]$. $\mathfrak{p}_0 \cap S \subsetneq \cdots \subsetneq \mathfrak{p}_r \cap S$ is a chain of prime ideals of $S$ with length $r$. Hence, we have: 
    \begin{equation*}
        r \leq \dim(S) = \trdeg_k(S) \leq \trdeg_k(R).
    \end{equation*}
    This completes the proof.
\end{proof}

In the above lemma, the equality does not hold in general if $R$ is not a finitely generated algebra over $k$ even if $R$ is Noetherian. For example, let $R = k(X)$ be the field of rational functions in one variable $X$ over a field $k$. Then we have $\dim(R) = 0$ but $\trdeg_k(R) = 1$.

Using the above lemma, we can show the following lemma.

\begin{lemma}\label{lem:trdegd}
    Let $R = \bigoplus_{n \geq 0} R_n$ be an $\NN$-graded ring with $R_0 = k$ a field. If $R$ is a Hilbert--Serre domain, then $\trdeg_k(R) \leq d(R)$.
\end{lemma}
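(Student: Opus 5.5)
Let $r$ be any integer with $r \leq \trdeg_k(R)$, and choose elements of $R$ algebraically independent over $k$. The goal is to show $r \leq d(R)$, which gives the claim (and also shows $\trdeg_k(R)<\infty$).

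\textbf{Step 1: pass to homogeneous elements.} Since $\Frac(R)$ is generated over $k$ by homogeneous elements of $R$ (every element of $R$ is a sum of its homogeneous components), some transcendence basis can be taken among homogeneous elements. So we may choose homogeneous, algebraically independent $y_1,\dots,y_r$, of positive degrees $h_1,\dots,h_r$. (A nonzero degree-zero element lies in $k$, so it is algebraic.) Then $S=k[y_1,\dots,y_r]\subseteq R$ is an $\NN$-graded polynomial ring with $\deg y_i=h_i$. The inclusion $S\hookrightarrow R$ is a homogeneous injective ring homomorphism with $S_0=k$.

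\textbf{Step 2: apply Proposition \ref{prop:hilbertserre}(3).} This gives $d(S)\le d(R)$. It remains to compute $d(S)$. We have
\[
P_S(t)=\prod_{i=1}^r \frac{1}{1-t^{h_i}}.
\]
Since $1-t^{h}=(1-t)(1+t+\cdots+t^{h-1})$, we get
\[
\lim_{t\to1-0}(1-t)^r P_S(t)=\prod_i \frac{1}{h_i},
\]
which is a finite positive number. For $d<r$, the limit $(1-t)^dP_S(t)\to\infty$. Hence $d(S)=r$. Alternatively, one could apply Proposition \ref{prop:hilbertserre}(2) inductively, cutting by $y_r, y_{r-1},\dots$, each of which is regular, and end at $d(k)=0$.

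\textbf{Conclusion and main obstacle.} We obtain $r\le d(R)$ for every such $r$, so $\trdeg_k(R)\le d(R)$. The only delicate point is Step 1: we must make sure a large algebraically independent set can be chosen homogeneous. This holds because $\Frac(R)$ is generated over $k$ by the set of homogeneous elements of $R$, and every generating set of a field extension contains a transcendence basis. Lemma \ref{lem:dimtrdeg} is not needed here, though it combines with this result to give $\dim(R)\le d(R)$ for domains.
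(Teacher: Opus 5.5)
Your proof is correct and follows essentially the same route as the paper: replace the algebraically independent elements by homogeneous ones (using that $\Frac(R)$ is generated over $k$ by homogeneous elements of $R$), form the graded polynomial subring, and apply Proposition \ref{prop:hilbertserre}(3). Your explicit computation $\lim_{t\to1-0}(1-t)^rP_S(t)=\prod_i 1/h_i$ also justifies $d(S)=r$ when the generators have different degrees, a step the paper only asserts.
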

\begin{proof}
    Let $\frac{r_1}{s_1}, \ldots, \frac{r_t}{s_t} \in \Frac(R)$ be algebraically independent elements over $k$, where $\Frac(R)$ is the field of fractions of $R$ and $r_i, s_i \in R$. Let $H$ be the set of all homogeneous elements of $R$ which appear in the homogeneous decomposition of $r_i$ and $s_i$ for some $i = 1, \ldots, t$. Then $k(h \mid h \in H)$ clearly contains all $\frac{r_i}{s_i}$. We can find $h_1, \ldots, h_t \in H$ such that these $h_1, \ldots, h_t \in R$ are algebraically independent over $k$. 

    We put $T = k[h_1, \ldots, h_t]$. Then $T$ is a polynomial ring in $t$ variables over $k$ and the natural inclusion $T \hookrightarrow R$ is a homogeneous injective ring homomorphism. Hence, by Proposition \ref{prop:hilbertserre} (3), we have $t = d(T) \leq d(R)$. This means $\trdeg_k(R) \leq d(R)$.
\end{proof}

Note that for a polynomial ring $R = k[x_1, \ldots, x_n]$ in $n$ variables over a field $k$, we have $\dim(R) = \trdeg_k(R) = d(R) = n$.

At this point, we have $\dim(R) \leq \trdeg_k(R) \leq d(R)$ for a Hilbert--Serre domain $R$. Using this fact, we can improve the inequality $\dimgr(R) \leq d(R)$ into $\dim(R) \leq d(R)$ for a Hilbert--Serre ring $R$ as follows.

\begin{lemma}\label{lem:dimd}
    Let $R = \bigoplus_{n \geq 0} R_n$ be an $\NN$-graded ring with $R_0 = k$ a field. If $R$ is a Hilbert--Serre ring, then $\dim(R) \leq d(R)$.
\end{lemma}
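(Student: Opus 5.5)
Take a chain of prime ideals $\mathfrak{p}_0 \subsetneq \cdots \subsetneq \mathfrak{p}_r$ in $R$; the goal is $r \leq d(R)$. By Corollary \ref{cor:finitedimension}, $\dim(R)$ is finite, so we may take $r = \dim(R)$, although any chain will do. The strategy is to pass to a domain quotient by a homogeneous prime and apply $\dim \leq \trdeg_k \leq d$ there (Lemmas \ref{lem:dimtrdeg} and \ref{lem:trdegd}). The natural candidate for that homogeneous prime is $\mathfrak{p}_0^*$.

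Since $\mathfrak{p}_0^*$ is a homogeneous prime with $\mathfrak{p}_0^* \subseteq \mathfrak{p}_0$, the ring $R' = R/\mathfrak{p}_0^*$ is an $\NN$-graded domain with $R'_0 = k$. By Proposition \ref{prop:hilbertserre} (1), it is a Hilbert--Serre ring with $d(R') \leq d(R)$. The chain $\mathfrak{p}_0/\mathfrak{p}_0^* \subsetneq \cdots \subsetneq \mathfrak{p}_r/\mathfrak{p}_0^*$ is a chain of primes of length $r$ in $R'$, so $r \leq \dim(R')$. Lemmas \ref{lem:dimtrdeg} and \ref{lem:trdegd} then give
\begin{equation*}
    r \leq \dim(R') \leq \trdeg_k(R') \leq d(R') \leq d(R).
\end{equation*}

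The only step that needs care is that the reduction uses a \emph{homogeneous} ideal, so that Proposition \ref{prop:hilbertserre} (1) applies. Quotienting by $\mathfrak{p}_0$ itself would not give a graded ring, and that is exactly why we homogenize. No other step should cause difficulty: the homogenization $\mathfrak{p}_0^*$ is prime by the fact recalled in Section \ref{sec:preliminaries}, and the chain survives the quotient because every $\mathfrak{p}_i$ contains $\mathfrak{p}_0^*$. Lemma \ref{lem:roberts} is not needed here, since we never compare $\mathfrak{p}_0$ with $\mathfrak{p}_0^*$ beyond the inclusion. Taking the supremum over all chains yields $\dim(R) \leq d(R)$.
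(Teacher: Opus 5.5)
Your proof is correct and follows the paper's route: pass to the quotient by a homogeneous prime, which is a Hilbert--Serre domain by Proposition \ref{prop:hilbertserre} (1), and apply Lemmas \ref{lem:dimtrdeg} and \ref{lem:trdegd} there. The one difference is that you quotient by $\mathfrak{p}_0^*$ for an arbitrary chain. The paper instead takes a chain of maximal length, which is possible by Corollary \ref{cor:finitedimension}, and uses maximality to force $\mathfrak{p}_0$ to be homogeneous. Your variant therefore does not need Corollary \ref{cor:finitedimension}, nor the results it relies on, and that corollary then follows from the lemma.
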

\begin{proof}
    By Corollary \ref{cor:finitedimension}, we have $\dim(R) < \infty$. We can take a chain of prime ideals $\mathfrak{p}_0 \subsetneq \cdots \subsetneq \mathfrak{p}_r$ of $R$ with length $r = \dim(R)$. Now $\mathfrak{p}_0$ is a homogeneous prime ideal. If not, its homogenization $\mathfrak{p}_0^*$ is a homogeneous prime ideal of $R$ and $\mathfrak{p}_0^* \subsetneq \mathfrak{p}_0$ holds. This means we have the chain of prime ideals:
    \begin{equation*}
        \mathfrak{p}_0^* \subsetneq \mathfrak{p}_0 \subsetneq \mathfrak{p}_1 \subsetneq \cdots \subsetneq \mathfrak{p}_r.
    \end{equation*}
    This contradicts the definition of $r = \dim(R)$. Hence, $\mathfrak{p}_0$ is a homogeneous prime ideal. Then we have:
    \begin{equation*}
        \dim(R) = \dim(R/\mathfrak{p}_0) \leq d(R/\mathfrak{p}_0) \leq d(R),
    \end{equation*}
    by Proposition \ref{prop:hilbertserre} (1) and Lemma \ref{lem:trdegd}. This completes the proof.
\end{proof}

Now, we define the Gelfand-Kirillov dimension (GKdim) and claim some basic known properties of it.

\begin{definition}\label{def:gkdim}
    Let $k$ be a field and $R$ be a $k$-algebra. For a finite-dimensional $k$-subspace $V$ of $R$, we put $V^n$ as the finite-dimensional subspace of $R$ spanned by all monomials of $n$ elements in $V$ over $k$. Then the Gelfand-Kirillov dimension of $R$ over $k$ is defined as
    \begin{equation*}
        \gkdim_k(R) = \sup_V \limsup_{n \to \infty} \frac{\log(d_V(n))}{\log(n)},
    \end{equation*}
    where $d_V(n) = \dim_k(k + V + \cdots + V^n)$ and the supremum is taken over all finite-dimensional $k$-subspaces $V$ of $R$ and $\log$ is the logarithm with base $e$.
\end{definition}

It is obvious that the equality
\begin{equation*}
    \gkdim_k(R) = \sup\left\{\gkdim_k(S) \mid S \subset R \text{ is a finitely generated } k\text{-subalgebra of } R\right\}
\end{equation*}
holds.

We record some basic properties of the Gelfand-Kirillov dimension without proofs. 

\begin{remark}\label{rem:gkdim}
    Let $k$ be a field. Then we have the following properties of the Gelfand-Kirillov dimension.
    \begin{itemize}
        \item[(1)] If $B$ is a subalgebra or a homomorphic image of $A$, then $\gkdim_k(B) \leq \gkdim_k(A)$. (\cite[Lemma 3.1]{KL00})
        \item[(2)] If $A$ is finitely generated as a $k$-algebra, then $\gkdim_k(A) = \dim(A)$. (\cite[Theorem 4.5]{KL00})
    \end{itemize}    
\end{remark}

Now, we can see that $\trdeg_k(R) = \gkdim_k(R)$ for a $k$-algebra $R$ that is a domain by using the following Lemma \ref{lem:trdeggkdim}. 

\begin{lemma}\label{lem:trdeggkdim}
    Let $R$ be a domain containing a field $k$ as a subring. Then we have:
    \begin{equation*}
        \trdeg_k(R) = \sup\left\{\trdeg_k(S) \mid S \subset R \text{ is a finitely generated } k\text{-subalgebra of } R\right\}. 
    \end{equation*}
    Therefore, we have $\trdeg_k(R) = \gkdim_k(R)$.
\end{lemma}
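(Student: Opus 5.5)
We prove the supremum formula first and then deduce the equality with $\gkdim_k(R)$ from Remark \ref{rem:gkdim} (2).

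\emph{The supremum formula.} The inequality ``$\geq$'' is immediate. If $S \subset R$ is a $k$-subalgebra, then $S$ is a domain and $\Frac(S) \subseteq \Frac(R)$. Hence any family of elements of $S$ that is algebraically independent over $k$ remains so in $\Frac(R)$, so $\trdeg_k(S) \leq \trdeg_k(R)$.

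For ``$\leq$'', first reduce to elements of $R$ itself. Every element of $\Frac(R)$ is algebraic over the subfield generated by $R$, since $\Frac(R)$ is the fraction field of $R$. Hence a transcendence basis of $\Frac(R)$ over $k$ can be chosen inside $R$, and $\trdeg_k(R)$ equals the supremum of the sizes of finite algebraically independent subsets of $R$. This includes the case $\trdeg_k(R)=\infty$, where there are arbitrarily large such finite subsets.

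Now take algebraically independent elements $a_1,\ldots,a_t \in R$ and put $S = k[a_1,\ldots,a_t]$. This $S$ is a finitely generated $k$-subalgebra with $\trdeg_k(S) = t$. Taking the supremum over $t$ gives ``$\leq$''.

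\emph{The equality $\trdeg_k(R)=\gkdim_k(R)$.} Use the identity recorded after Definition \ref{def:gkdim}:
\[
\gkdim_k(R) = \sup \{ \gkdim_k(S) \mid S \subset R \text{ finitely generated } k\text{-subalgebra} \}.
\]
For each such $S$, Remark \ref{rem:gkdim} (2) gives $\gkdim_k(S) = \dim(S)$. Since $S$ is a finitely generated domain over $k$, \cite[Theorem 5.6]{Mat86} gives $\dim(S) = \trdeg_k(S)$, as already noted in the proof of Lemma \ref{lem:dimtrdeg}. So the two suprema are taken over the same family of subalgebras, term by term equal, and the supremum formula yields $\trdeg_k(R) = \gkdim_k(R)$.

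\emph{Main obstacle.} The only subtle step is the reduction from $\Frac(R)$ to $R$. One must check that $\Frac(R)$ is algebraic over the field generated by any transcendence basis extracted from $R$, and that infinite transcendence degree is handled by finite subfamilies. The standard fact that a generating set of a field extension contains a transcendence basis handles both points.
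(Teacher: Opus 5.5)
Your proof is correct and follows the paper's argument: the trivial direction, then a finitely generated subalgebra capturing a finite algebraically independent family, then comparing the two suprema via Remark \ref{rem:gkdim} (2) and $\dim = \trdeg$ for finitely generated domains (a step the paper leaves implicit). The only difference is minor: the paper takes $S = k[r_i, s_i]$ from the numerators and denominators of algebraically independent fractions $r_i/s_i$, which avoids your appeal to the fact that a field-generating set contains a transcendence basis, while your route gives a polynomial subalgebra with $\trdeg_k(S)$ exactly $t$.
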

\begin{proof}
    $\geq$ is clear. For $\leq$, let $\frac{r_1}{s_1}, \ldots, \frac{r_n}{s_n} \in \Frac(R)$ be any finite algebraically independent elements over $k$. We put $S = k[r_i, s_i \mid i = 1, \ldots, n]$. Then $S$ is a finitely generated $k$-algebra and $\trdeg_k(S) \geq n$. Since the finite algebraically independent set was arbitrary, the desired inequality follows.
\end{proof}

At this point, we have 
\begin{equation*}
    \dimgr(R) \leq \dim(R) \leq \trdeg_k(R) = \gkdim_k(R) \leq d(R)
\end{equation*}
for a Hilbert--Serre domain $R$. Using this fact, we can show Theorem \ref{th:main} which is our main result. 

\begin{theorem}\label{th:main}
    Let $R = \bigoplus_{n \geq 0} R_n$ be an $\NN$-graded ring with $R_0 = k$ a field. If $R$ is a Hilbert--Serre ring, then we have the following first assertion:
    \begin{equation*}
        \dimgr(R) \leq \dim(R) \leq \gkdim_k(R) \leq d(R).
    \end{equation*}
    Moreover, if $R$ is a domain, then we have the following second assertion:
    \begin{equation*}
        \dimgr(R) \leq \dim(R) \leq \trdeg_k(R) = \gkdim_k(R) \leq d(R).
    \end{equation*}
\end{theorem}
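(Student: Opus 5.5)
The domain case is essentially already assembled from the lemmas above. I will assemble it first and then reduce the general case to it by passing to quotients by homogeneous minimal primes.

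\emph{Domain case.} Let $R$ be a Hilbert--Serre domain. Then $\dimgr(R)\leq\dim(R)$ holds by definition. Lemma \ref{lem:dimtrdeg} gives $\dim(R)\leq\trdeg_k(R)$. Lemma \ref{lem:trdeggkdim} gives $\trdeg_k(R)=\gkdim_k(R)$, and Lemma \ref{lem:trdegd} gives $\trdeg_k(R)\leq d(R)$. This settles the second assertion.

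\emph{General case.} Let $R$ be an arbitrary Hilbert--Serre ring. Again $\dimgr(R)\leq\dim(R)$ is trivial.

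For $\gkdim_k(R)\leq d(R)$, I would not try to pass through domains. Instead I would argue directly. Take any finitely generated $k$-subalgebra $A\subseteq R$, and enlarge it to the subalgebra $A'$ generated by all homogeneous components of the generators of $A$. Then $A'$ is a finitely generated graded subalgebra with $A'_0=k$, so it is Noetherian. By Remark \ref{rem:gkdim}(1),(2), Smoke's dimension theorem, and Proposition \ref{prop:hilbertserre}(3) applied to $A'\hookrightarrow R$, we get
\begin{equation*}
\gkdim_k(A)\leq\gkdim_k(A')=\dim(A')=d(A')\leq d(R).
\end{equation*}
Taking the supremum over all such $A$ gives $\gkdim_k(R)\leq d(R)$.

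For $\dim(R)\leq\gkdim_k(R)$, I follow the argument of Lemma \ref{lem:dimd}. By Corollary \ref{cor:finitedimension}, $\dim(R)$ is finite. Take a maximal-length chain $\mathfrak p_0\subsetneq\cdots\subsetneq\mathfrak p_r$ with $r=\dim(R)$. The bottom prime $\mathfrak p_0$ must be homogeneous: otherwise $\mathfrak p_0^*\subsetneq\mathfrak p_0$ would extend the chain. Hence $R/\mathfrak p_0$ is a Hilbert--Serre domain by Proposition \ref{prop:hilbertserre}(1). The domain case applied to it, together with Remark \ref{rem:gkdim}(1) for the surjection $R\to R/\mathfrak p_0$, gives
\begin{equation*}
\dim(R)=\dim(R/\mathfrak p_0)\leq\gkdim_k(R/\mathfrak p_0)\leq\gkdim_k(R).
\end{equation*}

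The main subtlety is the step $\gkdim_k(R)\leq d(R)$ for non-domains. One cannot simply take the supremum over homogeneous minimal primes, because $\gkdim$ of a non-domain need not be attained at a single prime in the non-Noetherian setting. That is why I would use the subalgebra-plus-Smoke argument above. The two points to check there are that homogenizing the generators keeps $A'$ finitely generated, and that $A'$ satisfies the hypotheses of Smoke's theorem ($A'_0=k$ and Noetherian); both are routine.
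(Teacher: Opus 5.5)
Your proof is correct. The domain case and the step $\dim(R)\le\gkdim_k(R)$ are exactly the paper's argument: the bottom prime of a maximal chain is homogeneous, and you pass to the Hilbert--Serre domain $R/\mathfrak p_0$.

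The genuine difference is in $\gkdim_k(R)\le d(R)$.
\begin{itemize}
\item \textbf{The paper's route.} It proves this directly by a growth estimate. For finite-dimensional $V\subseteq\bigoplus_{i\le m}R_i$ it bounds $d_V(n)\le\sum_{i=0}^{mn}\dim_k R_i$. It then uses $P_R(t)<C(1-t)^{-d}$ near $t=1$, evaluated at $t=1-\frac1n$, to get $d_V(n)=O(n^{d})$. This is elementary: it uses only the definition of $\gkdim$ and the pole bound, and it does not invoke Smoke's theorem. So this step of the non-Noetherian result does not depend on the Noetherian one.
\item \textbf{Your route.} You exhaust $R$ by finitely generated graded subalgebras $A'$, which are Noetherian with $A'_0=k$. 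You then chain Remark \ref{rem:gkdim}(2), Smoke's theorem and Proposition \ref{prop:hilbertserre}(3). Given that machinery it is shorter and conceptually clean. It also yields the sharper identity
\[
\gkdim_k(R)=\sup\{d(A')\mid A'\subseteq R \text{ a finitely generated graded subalgebra}\}.
\]
This makes visible that a gap $\gkdim_k(R)<d(R)$, as in Example \ref{ex:countergr}, is exactly the failure of $d(R)$ to be approached from inside by Noetherian graded subalgebras.
\end{itemize}

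A small correction to your closing remark. The minimal-prime route is not blocked by the supremum failing to be attained. The bound $d(R/\mathfrak p)\le d(R)$ is uniform in $\mathfrak p$, and minimal primes of an $\NN$-graded ring are automatically homogeneous. What that route actually needs is the fact that $\gkdim_k(R)=\sup_{\mathfrak p}\gkdim_k(R/\mathfrak p)$ over minimal primes of a commutative algebra. This is true, but it requires its own argument: pass to finitely generated subalgebras, and use that minimal primes of a subring are contractions of minimal primes of the ring. So your choice to avoid it is reasonable, just for a different reason than the one you gave.
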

\begin{proof}
    The second assertion is already shown in Lemma \ref{lem:dimtrdeg}, Lemma \ref{lem:trdeggkdim}, and Lemma \ref{lem:trdegd}. We have to show the first assertion. By Corollary \ref{cor:finitedimension} and the same argument as in the proof of Lemma \ref{lem:dimd}, we can take a homogeneous minimal prime ideal $\mathfrak{p}$ of $R$ such that $\dim(R/\mathfrak{p}) = \dim(R)$. Then we have
    \begin{equation*}
        \dim(R) = \dim(R/\mathfrak{p}) \leq \gkdim_k(R/\mathfrak{p}) \leq \gkdim_k(R),
    \end{equation*}
    by Remark \ref{rem:gkdim} (1) and the second assertion. Finally, we have to show that $\gkdim_k(R) \leq d(R)$. We take a finite-dimensional $k$-subspace $V$ of $R$. Since $V$ is finite-dimensional, we can find $m \in \NN$ such that $V \subseteq \bigoplus_{i=0}^m R_i$. Hence, we have
    \begin{equation*}
        d_V(n) = \dim_k(k + V + \cdots + V^n) \leq \dim_k\left(\bigoplus_{i=0}^{mn} R_i\right) = \sum_{i=0}^{mn} \dim_k(R_i),
    \end{equation*}
    for sufficiently large $n \in \NN$. We put $d = d(R)$. Since we assumed that $R$ is a Hilbert--Serre ring, we have $\lim_{t \to 1-0} (1-t)^d P_R(t) < \infty$. This means that there exists $C > 0$ and $\delta > 0$ such that for all $t \in (1-\delta, 1)$, we have $P_R(t) < \frac{C}{(1-t)^d}$. Hence, we have
    \begin{equation*}
        d_V(n)t^{mn} \leq \sum_{i=0}^{mn} \dim_k(R_i)t^{mn} \leq \sum_{i=0}^{mn} \dim_k(R_i)t^i \leq P_R(t) < \frac{C}{(1-t)^d}
    \end{equation*}
    for all $t \in (1-\delta, 1)$. Since $n$ is sufficiently large, we have $t = 1 - \frac{1}{n} \in (1-\delta, 1)$. Then, we can find $C' > 0$ such that
    \begin{equation*}
        d_V(n) < \frac{C'}{(1-t)^d} = C' n^d
    \end{equation*}
    for all sufficiently large $n \in \NN$. Hence, we can calculate as follows:
    \begin{equation*}
        \begin{split}
            \gkdim_k(R) &= \sup_V \limsup_{n \to \infty} \frac{\log(d_V(n))}{\log(n)} \\
            &\leq \sup_V \limsup_{n \to \infty} \frac{\log(C' n^d)}{\log(n)} \\ 
            &= d = d(R).
        \end{split}
    \end{equation*}
    Our proof is complete.
\end{proof}

We can now observe that the inequality 
\begin{equation*}
    \dimgr(R) \leq \dim(R) \leq \trdeg_k(R) \leq d(R)
\end{equation*}
for a Hilbert--Serre domain $R$ can be generalized to the inequality
\begin{equation*}
    \dimgr(R) \leq \dim(R) \leq \gkdim_k(R) \leq d(R)
\end{equation*}
for a Hilbert--Serre ring $R$ by using the language of the Gelfand-Kirillov dimension.

As already mentioned in the Introduction, all inequalities in our main theorem are equalities when $R$ is Noetherian; see \cite[Theorem 5.5]{Smo72}. However, in general, these equalities do not hold even if $R$ is a domain. We will see examples in the next section. On the other hand, all inequalities are equalities if $R$ is a monomial algebra over $k$ (Theorem \ref{th:monomial}). 

\section{Smoke's Dimension Theorem for Monomial Algebras}\label{sec:monomial}

In the previous section, we established the general inequalities
\begin{equation*}
    \dimgr(R) \leq \dim(R) \leq \trdeg_k(R) = \gkdim_k(R) \leq d(R)
\end{equation*}
for Hilbert--Serre domains $R$ over a field $k$. Our main goal is to demonstrate that, for monomial algebras, all the dimensions coincide.

\begin{definition}[Monomial algebra]\label{def:monomial}
    Let $R = k[x_1, \ldots, x_n]$ be a polynomial ring over a field $k$. A \emph{monomial algebra} over $k$ is a $k$-subalgebra of $R$ generated by a set of monomials. Equivalently, a monomial algebra over $k$ is a $k$-algebra of the form $k[M] = k[\{ x^a \mid a \in M \}]$, where $M$ is an additive submonoid of $\NN^n$ and $x^a = x_1^{a_1} \cdots x_n^{a_n}$ for $a = (a_1, \ldots, a_n) \in M$. This algebra is naturally isomorphic to the monoid ring of $M$ over $k$. Note that a monomial algebra is an $\NN$-graded ring with the standard grading inherited from $R$, where $\deg(x_i) = 1$ for each $i = 1, \ldots, n$. This $k[M]$ is a Hilbert--Serre ring by Proposition \ref{prop:hilbertserre} (3). 
\end{definition}

To prove Smoke's dimension theorem for monomial algebras, we need the following theorem. This theorem also enables us to compute the Krull dimension of a monomial algebra $k[M]$ easily; see Example \ref{ex:initial-nonnoeth-jacobian}. 

\begin{theorem}[{\cite{AG76}, \cite[Theorem 21.4]{Gil84}}]\label{th:gilmer}
    Let $R$ be a commutative ring with identity, $S$ a cancellative monoid, and $G$ its group of fractions (sometimes called the Grothendieck group). Then we have
    \begin{equation*}
        \dim(R[S]) = \dim(R[G])
    \end{equation*}
    where $R[S]$ is the monoid algebra of $S$ over $R$ and $R[G]$ is the group algebra of $G$ over $R$.
\end{theorem}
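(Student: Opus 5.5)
The inequality $\dim(R[G]) \leq \dim(R[S])$ is the easy half, and I would prove it first. $R[G]$ is the localization of $R[S]$ at the multiplicative set of monomials $\{x^s \mid s \in S\}$: every element of $G$ has the form $s-s'$ with $s,s' \in S$. Hence $\Spec(R[G])$ embeds in $\Spec(R[S])$ preserving inclusions, and chains of primes in $R[G]$ contract to chains of the same length in $R[S]$.

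For the reverse inequality I would reduce to finitely generated monoids. Write $S$ as the directed union of its finitely generated submonoids $S_i$, and let $G_i = S_i - S_i \subseteq G$. Take a chain $\mathfrak{p}_0 \subsetneq \cdots \subsetneq \mathfrak{p}_r$ in $R[S]$ and pick $y_j \in \mathfrak{p}_j \setminus \mathfrak{p}_{j-1}$. Only finitely many monomials occur in the $y_j$, so they all lie in some $R[S_i]$. Contracting the chain to $R[S_i]$ then gives a chain of length $r$, as in the proof of Lemma \ref{lem:dimtrdeg}, so $\dim(R[S]) \leq \sup_i \dim(R[S_i])$. I would also check that $\dim(R[G_i]) \leq \dim(R[G])$. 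Indeed, $R[G]$ is free over $R[G_i]$, with basis a set of coset representatives of $G/G_i$, hence faithfully flat. Given a chain in $R[G_i]$, lying over lifts its top prime, and going down (which holds for flat extensions) then lifts the rest of the chain. It therefore suffices to prove $\dim(R[S_i]) \leq \dim(R[G_i])$ for a finitely generated cancellative monoid $S_i$.

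For such $S_i$, let $r = \rank(G_i)$. I expect to show
\begin{equation*}
    \dim(R[S_i]) = \dim(R[x_1,\ldots,x_r]) = \dim(R[G_i]).
\end{equation*}
First, $G_i \cong \ZZ^r \oplus F$ with $F$ finite, so $R[G_i]$ is integral over $R[\ZZ^r] = R[x_1^{\pm1},\ldots,x_r^{\pm1}]$. For $S_i$, split off its unit group and pass to the image in $G_i/\text{torsion} \cong \ZZ^r$. Finite torsion only contributes integral extensions, so it does not change dimension. The remaining pointed part spans a rational polyhedral cone, and choosing a simplicial subcone gives a free submonoid $\NN^{a}$, $a+b=r$, such that $S_i$ is integral over $\NN^a \times \ZZ^b$. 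Thus $R[S_i]$ is integral over a mixed polynomial/Laurent ring $R[x_1,\ldots,x_a,x_{a+1}^{\pm1},\ldots,x_r^{\pm1}]$; here one uses that some multiple of each element of $S_i$ lies in the free submonoid. Integral extensions preserve dimension by lying over, going up and incomparability.

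The step I expect to be the main obstacle is the remaining one: for an arbitrary, possibly non-Noetherian, ring $R$, inverting variables does not change dimension, namely
\begin{equation*}
    \dim(R[x_1,\ldots,x_a,x_{a+1}^{\pm1},\ldots,x_r^{\pm1}]) = \dim(R[x_1,\ldots,x_r]).
\end{equation*}
The inequality $\leq$ is again localization. For $\geq$, I would first try an automorphism trick. Given a chain in $R[x_1,\ldots,x_r]$, apply a change of variables $x_j \mapsto x_j + c$ or $x_j \mapsto x_j + x_1^{N}$ so that $x_{a+1}\cdots x_r$ avoids the top prime. Then the whole chain survives the localization. If no suitable constants exist in $R$, the substitution by powers of another variable is the fallback. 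If that fails, I would instead use a degree/leading-term argument in the style of Seidenberg's treatment of $\dim R[x]$, tracking chains through the contraction to $R$.
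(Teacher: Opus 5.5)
The paper does not prove this theorem; it quotes it from Arnold--Gilmer, so your argument has to stand on its own. Much of it does. $R[G]$ is the localization of $R[S]$ at the monomials. The reduction to finitely generated submonoids $S_i$ is correct, and so is $\dim R[G_i]\le\dim R[G]$ via faithful flatness. The step you flag as the main obstacle is actually easy: $x_j$ and $x_j-1$ cannot both lie in a proper prime. So an $R$-automorphism $x_j\mapsto x_j+\epsilon_j$ with $\epsilon_j\in\{0,1\}$ moves the top prime of any chain off $x_{a+1}\cdots x_r$, and the whole chain survives the localization.

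The genuine gap is the claim that $R[S_i]$ is integral over $R[N]$ for a submonoid $N\cong\NN^a\times\ZZ^b$ of $S_i$ coming from a simplicial subcone.
\begin{itemize}
\item \emph{Why integrality forces a simplicial cone.} For $R\neq 0$, a monomial $x^s$ is integral over $R[N]$ only if $(m-j)s\in N$ for some $0\le j<m$. In a monic relation $x^{ms}+c_{m-1}x^{(m-1)s}+\cdots+c_0=0$, the coefficient of $x^{ms}$ in the $G_i$-graded ring $R[G_i]$ is $1$ plus the coefficients of the $c_j$ at $(m-j)s$. Hence integrality forces the cone of $S_i$ to equal the cone of $N$, i.e.\ the cone of $S_i$ modulo its lineality space must itself be simplicial.
\item \emph{A counterexample.} Take $S=\langle (1,0,0),(0,1,0),(1,0,1),(0,1,1)\rangle\subseteq\NN^3$, so $R[S]\cong R[A,B,C,D]/(AD-BC)$. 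Its cone has four extremal rays, so no rank-$3$ free submonoid of $S$ works.
\end{itemize}
This step is the only source of your upper bound $\dim R[S_i]\le\dim R[G_i]$. So the proof is incomplete exactly where the content of the theorem lies.

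A repair needs a different mechanism. One option is non-monomial parameters with unit leading coefficients. In the example, $R[S]$ is integral over the polynomial ring $R[A,D,B-C]$, since $B^2-(B-C)B-AD=0$. You would then have to construct such parameters for an arbitrary affine monoid over an arbitrary $R$. Another option is to pass to the saturation of $S_i$, over which everything is integral over $R[S_i]$. Then take a unimodular subdivision of the dual cone. The resulting toric morphism from a scheme covered by charts $\Spec R[\NN^a\times\ZZ^b]$ is proper and dominant, hence surjective, so it lifts chains of primes. Closed maps lift specializations, and a chain of specializations ending in an affine chart lies entirely in that chart.

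Separately, the torsion step as written goes the wrong way. The map $R[S_i]\to R[\bar S_i]$ to the image in $G_i/\mathrm{torsion}$ is a surjection, not an integral extension. Use the subring $R[mS_i]$ with $m=|\mathrm{tors}(G_i)|$ instead.
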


In particular, if $R = k$ is a field and $S \subset \NN^n$ is a monoid, then we have $\dim(k[S]) = \dim(k[G])$ where $G$ is the group of fractions of $S$. Since $G$ is a free abelian group, we can find a $\ZZ$-basis $e_1, \ldots, e_g$ of $G$. Then we have:
\begin{align*}
    \dim(k[S]) = \rank(G)
\end{align*}

Using these results, we can show that Smoke's dimension theorem holds for monomial algebras. 

\begin{theorem}\label{th:monomial}
    Let $S$ be a monomial algebra over a field $k$. In other words, $S$ is a $k$-subalgebra of a polynomial ring $R = k[x_1, \ldots, x_n]$ generated by monomials (not necessarily Noetherian). Regarding $S$ as an $\NN$-graded ring with the standard grading inherited from $R$ as mentioned in Definition \ref{def:monomial}, we have:
    \begin{equation*}
        \dimgr(S) = \dim(S) = \trdeg_k(S) = \gkdim_k(S) = d(S).
    \end{equation*}
\end{theorem}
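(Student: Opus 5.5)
By Theorem \ref{th:main}, applied to $S$ (a Hilbert--Serre domain, by Definition \ref{def:monomial}), we already have
\begin{equation*}
    \dimgr(S) \leq \dim(S) \leq \trdeg_k(S) = \gkdim_k(S) \leq d(S).
\end{equation*}
So it suffices to prove two further inequalities, $d(S) \leq \dim(S)$ and $\dim(S) \leq \dimgr(S)$. Write $S = k[M]$ with $M \subseteq \NN^n$ a submonoid, and let $G = \ZZ M$ be its group of fractions, of rank $g$. By Theorem \ref{th:gilmer} and the remark after it, $\dim(S) = g$. The plan is to show $d(S) \leq g$ and $\dimgr(S) \geq g$.

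For $d(S) \leq g$ we compare Poincar\'e series. Since $M \subseteq G \cap \NN^n$, each graded piece satisfies $\dim_k S_m \leq \#\{a \in G \cap \NN^n \mid |a| = m\}$. Choose a $\ZZ$-basis $e_1,\dots,e_g$ of $G$. The map $\ZZ^g \to G \subseteq \ZZ^n$ is injective, and the degree-$m$ lattice points of $G \cap \NN^n$ lie in the simplex $\{a \in \RR_{\geq 0}^n \mid \sum a_i = m\}$ intersected with the $g$-dimensional subspace $G \otimes \RR$. This is a polytope of dimension at most $g-1$, dilated by $m$. Hence it contains at most $C m^{g-1}$ lattice points of $G$ for $m \geq 1$. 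One way to see this is to pick $g$ coordinates on which the projection of $G$ is injective; the projected points are integer vectors with entries in $[0,m]$ lying on a hyperplane section, so there are $O(m^{g-1})$ of them. Then
\begin{equation*}
    P_S(t) \leq \sum_m C m^{g-1} t^m + \text{const} = O\bigl((1-t)^{-g}\bigr),
\end{equation*}
so $(1-t)^g P_S(t)$ stays bounded as $t \to 1-0$. Since the coefficients are non-negative, the limit exists (this is the comparison argument of Proposition \ref{prop:hilbertserre}), and therefore $d(S) \leq g$.

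For $\dimgr(S) \geq g$ we need a chain of $g+1$ homogeneous primes. It is enough to exhibit a single homogeneous prime $\mathfrak{q}$ with $\dim(S/\mathfrak{q}) = g$ lying below a saturated chain of homogeneous primes. The natural approach is monomial (face-type) primes. Choose a weight vector $w \in \QQ_{>0}^n$ (or a flag of them) to cut $M$ into faces. For a face $F$ of the cone $\RR_{\geq 0}M$, the set $\mathfrak{p}_F = (x^a \mid a \in M \setminus F)$ is a prime monomial ideal with $S/\mathfrak{p}_F \cong k[M \cap F]$, and it is homogeneous. Taking a full flag of faces $0 = F_0 \subsetneq F_1 \subsetneq \cdots \subsetneq F_g = \RR_{\geq 0}M$ with $\dim F_i = i$ gives a chain of homogeneous primes $\mathfrak{p}_{F_g} = 0 \subsetneq \cdots \subsetneq \mathfrak{p}_{F_0} = S_+$. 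The inclusions are strict because $\ZZ(M \cap F_i)$ has rank $i$.

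The main obstacle is that for non-finitely generated $M$ the cone $\RR_{\geq 0}M$ need not be polyhedral, and $M \cap F$ might have smaller rank than $F$. I would handle this by choosing the faces as exposed faces cut out by successive supporting linear functionals, chosen to be lexicographic. Concretely, take a monomial order and repeatedly extremize, so each $M \cap F_i$ spans an $i$-dimensional space. If that fails, I would instead argue by induction on $g$. Find a monomial $x^a \in S$ such that $P = (x^b \mid b \in M,\ b \notin \text{face})$ is a homogeneous prime with quotient of rank $g-1$, using a supporting hyperplane $\ell$ of the cone with $\ell \geq 0$ on $M$ whose kernel meets $M$ in a submonoid of rank exactly $g-1$. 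Such an $\ell$ can be found by taking $g-1$ linearly independent elements of $M$ and choosing $\ell$ generic among functionals vanishing on them. Ensuring $\ell \geq 0$ on all of $M$ is precisely the delicate point.
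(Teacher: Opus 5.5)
Your reduction to $d(S)\le g$ and $\dimgr(S)\ge g$, with $g=\rank(G)=\dim(S)$ by Theorem \ref{th:gilmer}, is the same as the paper's. Your degree-by-degree count is a correct variant of the paper's cumulative bound $A(N)\le CN^{g}$: after projecting $G\otimes_{\ZZ}\RR$ isomorphically onto $g$ coordinates, the degree condition becomes one nonzero linear equation, giving $\dim_k S_m\le (m+1)^{g-1}$.

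The genuine gap is $\dimgr(S)\ge g$. You do not prove it, and the route you propose cannot work, because chains of monomial (face) primes are too short once $M$ is not finitely generated. For finitely generated $M$ the cone is rational polyhedral and your flag argument is fine, but that is the Noetherian case already covered by Theorem \ref{th:smoke}. Consider $M=\{(a,b)\in\NN^2\mid a,b\ge 1\}\cup\{0\}$, i.e.\ $S=k[x^ay^b\mid a,b\ge 1]$, where $g=2$. If a face $F$ of $M$ contains some $u\neq 0$, then for every $v\in M$ we have $Nu-v\in M$ for $N\gg 0$, hence $v\in F$. So the only faces are $\{0\}$ and $M$, and the only monomial primes of $S$ are $0$ and $S_+$. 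Geometrically, $\RR_{\ge 0}M$ is the open quadrant plus the origin, so there is no one-dimensional face to start a flag. Likewise, any linear $\ell\ge 0$ on $M$ that vanishes on a nonzero element of $M$ is identically zero, so your functional $\ell$ does not exist either. Nevertheless $\dimgr(S)=2$, witnessed by $0\subsetneq\ker\bigl(S\to k[t],\ x^ay^b\mapsto t^{a+b}\bigr)\subsetneq S_+$. This kernel is a homogeneous prime containing $x^2y-xy^2$ but not $xy$. So the missing idea is that you must use homogeneous primes that are \emph{not} monomial.

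The paper obtains them from the degree-zero part of a localization. Pick a monomial $x^a\in S$ of degree $d>0$. Then $A=(S_{x^a})_0$ is again a monomial algebra whose group lies between $d\cdot\ker(\deg|_G)$ and $\ker(\deg|_G)$, so $\dim(A)=g-1$ by Theorem \ref{th:gilmer}. The paper then builds the chain in four steps:
\begin{enumerate}
\item A chain of primes of $A$ of length $g-1$ extends to a chain of homogeneous primes of the Laurent ring $A[x^a,x^{-a}]=(S_{x^a})^{(d)}$.
\item This chain is lifted by going-up along the integral extension $(S_{x^a})^{(d)}\subseteq S_{x^a}$, replacing each lift by its homogenization.
\item The lifted chain is contracted to $S$.
\item Finally $S_+$ is appended on top, giving length $g$.
\end{enumerate}
A shorter version of the same idea also works. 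The ring $k[G]$ is the localization of $S$ at all monomials, and $k[G]\cong k[G_0][x^h,x^{-h}]$, where $G_0=\ker(\deg|_G)$ has rank $g-1$ and $\deg(h)$ generates $\deg(G)$. Extending a chain of primes of $k[G_0]$ of length $g-1$ gives homogeneous primes of $k[G]$. Their contractions to $S$ form a strict chain of homogeneous primes containing no monomial, so $S_+$ can again be appended.
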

\begin{proof}
    We already have $\dimgr(S) \leq \dim(S) \leq \trdeg_k(S) = \gkdim_k(S) \leq d(S)$ by Theorem \ref{th:main}. We have to show $\dim(S) = d(S)$ and $\dimgr(S) = \dim(S)$. Let $M$ be a monoid such that $S = k[M]$. Note that $M$ is a cancellative monoid since $M$ is a submonoid of $\NN^n$. We put $G$ to be the group of fractions of $M$. Then we have $\dim(S) = \dim(k[M]) = \dim(k[G])$ by Theorem \ref{th:gilmer}. 

    ($\dim(S) = d(S)$): We put $m = \trdeg_k(S)$. $G$ is a free abelian group, so we can find:
    \begin{equation*}
        k[G] \cong k[x_1^{\pm 1}, \ldots, x_g^{\pm 1}]
    \end{equation*}
    where $g$ is the rank of $G$. It is immediate that $k[M] \subset k[G] \subset \Frac(k[M])$. Hence, we have:
    \begin{equation*}
        g = \trdeg_k(k[G]) = \trdeg_k(\Frac(k[M])) = \trdeg_k(S) = m.
    \end{equation*}
    Then we find $g = m$. Now, it is enough to show that $d(S) \leq m$. If we can do it, then we have:
    \begin{equation*}
        \dim(S) = \dim(k[G]) = g = m = \trdeg_k(S).
    \end{equation*}
    Thus, it is enough to show $d(S) \leq m$. First, we decompose $S$ into the direct sum of $\NN$-graded rings as follows:
    \begin{equation*}
        S = \bigoplus_{n=0}^\infty S_n. 
    \end{equation*}
    Let $A(N) = \sum_{n=0}^{N} \dim_k(S_n)$ for $N \in \NN$. Note that
    \begin{equation*}
        \frac{P_S(t)}{1-t} = \sum_{N=0}^\infty A(N) t^N
    \end{equation*}
    holds. We consider $M, G$ as subsets of $\RR^n$. Let $V = G \otimes_{\ZZ} \RR$ be a real vector space generated by $G$. Since $G$ is a free abelian group of rank $m$, we take a $\ZZ$-basis $e_1, \ldots, e_m$ of $G$. Then $e_1,\ldots,e_m$ form an $\RR$-basis of $V=G\otimes_{\ZZ}\RR$.
    We put:
    \begin{equation*}
        P_N = \left\{ \alpha = (\alpha_1, \ldots, \alpha_n) \in \RR^n \ \middle| \ \alpha \in V, 0 \leq \forall \alpha_i, \sum_{i=1}^{n} \alpha_i \leq N \right\}. 
    \end{equation*}
    Clearly, the number of lattice points in $P_N$ gives an upper bound of $A(N)$ and $P_N = NP_1$. We take $\beta \in P_1$. Since $\beta \in V$, we can find $y_1, \ldots, y_m \in \RR$ such that $\beta = \sum_{i=1}^m y_i e_i$. Since $P_1$ is bounded, there exists a constant $K > 0$ independent of $y_i$ such that $|y_i| < K$ for all $i$. If we take $\alpha \in P_N$, we find
    \begin{equation*}
        \frac{1}{N} \alpha = \sum_{i=1}^m \frac{1}{N} z_i e_i \in P_1. 
    \end{equation*}
    Hence, we have $|z_j| \leq KN$. All $z_j$ are integers if and only if $\alpha \in G$. These $z_j$ are at most $2KN + 1$ integers. Hence, the number of lattice points in $P_N$ is at most $(2KN + 1)^m$. Therefore, we can calculate as follows:
    \begin{equation*}
        A(N) \leq (2KN + 1)^m \leq (2K+1)^m N^m.
    \end{equation*}
    In other words, we can take $C>0$ which does not depend on $N$ such that $A(N) \leq CN^m$ for all $N \in \NN$. Hence, we have
    \begin{equation*}
        \frac{P_S(t)}{1-t} = \sum_{N=0}^\infty A(N) t^N \leq C\sum_{N=0}^\infty N^m t^N.
    \end{equation*}
    It is well-known that the series on the right-hand side has a pole of order $m+1$ at $t=1$. Hence, we have $d(S) \leq m$. This completes the proof. 

    ($\dimgr(S) = \dim(S)$): We have to show that $\dim(S) \leq \dimgr(S)$. The case when $\dim(S) = 0$ is trivial. We assume $\dim(S) > 0$. We can take a non-trivial monomial $x^a = x_1^{a_1}\cdots x_n^{a_n}$ in $S$ such that $0 \neq a = (a_1, \ldots, a_n) \in M$. Let $d = \deg(a) = a_1 + \cdots + a_n$. The localization $S_{x^a}$ is a $\ZZ$-graded ring in a natural way. We put $A = (S_{x^a})_0$. Then $A$ is a monomial algebra over $k$ with the following monoid:
    \begin{equation*}
        M' = \{ b - l a \in G \mid b \in M, l \in \NN, \deg(b - l a) = 0 \}.
    \end{equation*}
    In other words, $A = k[M']$. Let $G'$ be the group of fractions of $M'$. The following mapping is a group morphism:
    \begin{equation*}
        \deg \colon G \to \ZZ \colon g = (g_1, \ldots, g_n) \mapsto \deg(g) = g_1 + \cdots + g_n.
    \end{equation*}
    $\deg$ is not a zero morphism since $\deg(a) = d > 0$. Hence, $\rank(\ker(\deg)) = \rank(G) - 1 = g - 1$. Clearly, we have $M' \subset \ker(\deg)$. Thus $G' \subset \ker(\deg)$. Moreover, we can find $d \cdot \ker(\deg) \subset G'$. Take any $g \in \ker(\deg)$. Since $g \in G$, we can find $m_1, m_2 \in M$ such that $g = m_1 - m_2$. We put $c = \deg(m_1) = \deg(m_2)$. Then we have:
    \begin{equation*}
        d \cdot g = d \cdot m_1 - d \cdot m_2 = (d m_1 - c a) - (d m_2 - c a) \in G'
    \end{equation*}
    because $d m_i - c a \in M' \quad (i = 1,2)$. Hence, we have $d \cdot \ker(\deg) \subset G' \subset \ker(\deg)$ and thus $\rank(G') = \rank(\ker(\deg)) = g - 1$. Then we have:
    \begin{equation*}
        \dim(A) = \dim(k[M']) = \dim(k[G']) = \rank(G') = g - 1 = \dim(S) - 1.
    \end{equation*}
    Thus, we can find a chain of prime ideals $0 = \mathfrak{p}_0 \subsetneq \cdots \subsetneq \mathfrak{p}_{g-1}$ of $A$ of length $g - 1$. \\
    On the other hand, we consider the $d$-th Veronese subalgebra $(S_{x^a})^{(d)}$. We have:
    \begin{equation*}
        (S_{x^a})^{(d)} = \bigoplus_{l \in \ZZ} (S_{x^a})_{ld} = \bigoplus_{l \in \ZZ} A \cdot (x^a)^l = A[x^a, x^{-a}]. 
    \end{equation*}
    We can easily check that $x^a$ is algebraically independent over $A$. Hence, $(S_{x^a})^{(d)}$ is a Laurent polynomial ring over $A$. Therefore, we can take a chain of homogeneous prime ideals $0 = \mathfrak{p}_0[x^a, x^{-a}] \subsetneq \cdots \subsetneq \mathfrak{p}_{g-1}[x^a, x^{-a}]$ of $(S_{x^a})^{(d)} = A[x^a, x^{-a}]$ of length $g - 1$. Moreover, for all monomials $y \in S_{x^a}$, $y^d \in (S_{x^a})^{(d)}$. Therefore, $A[x^a, x^{-a}] = (S_{x^a})^{(d)} \hookrightarrow S_{x^a}$ is an integral extension. Thus, by the going-up theorem, we can find a prime ideal $\mathfrak{q}_1$ of $S_{x^a}$ such that $\mathfrak{q}_1 \cap A[x^a, x^{-a}] = \mathfrak{p}_1[x^a, x^{-a}]$. Since $\mathfrak{p}_1[x^a, x^{-a}] \subset \mathfrak{q}_1$ and $\mathfrak{p}_1[x^a, x^{-a}]$ is homogeneous, $\mathfrak{p}_1[x^a, x^{-a}] \subset \mathfrak{q}_1^* \subset \mathfrak{q}_1$ and thus $\mathfrak{q}_1^* \cap A[x^a, x^{-a}] = \mathfrak{p}_1[x^a, x^{-a}]$. Hence, we can assume $\mathfrak{q}_1$ is a homogeneous prime ideal of $S_{x^a}$. By the same argument, we can find a chain of homogeneous prime ideals $\mathfrak{q}_0 \subsetneq \cdots \subsetneq \mathfrak{q}_{g-1}$ of $S_{x^a}$ of length $g - 1$. Hence, we can find a chain of prime ideals $\mathfrak{q}_0 \cap S \subsetneq \cdots \subsetneq \mathfrak{q}_{g-1} \cap S$ of $S$ of length $g - 1$. All these homogeneous primes do not contain $x^a$. Therefore, we can find a chain of homogeneous prime ideals $\mathfrak{q}_0 \cap S \subsetneq \cdots \subsetneq \mathfrak{q}_{g-1} \cap S \subsetneq \bigoplus_{n>0}S_n$ of $S$ of length $g$. Hence, we have $\dimgr(S) \geq g = \dim(S)$. This completes the proof.
\end{proof}

As mentioned in the Introduction, this result is applied to the initial algebras of subalgebras of polynomial rings in the next section.

\section{Applications to Initial Algebras}\label{sec:initial}

In this section, we apply the results of the previous sections to initial algebras. First, let us briefly recall the standard setup and notation for initial algebras. Let $R = k[x_1, \ldots, x_n]$ be a polynomial ring over a field $k$. We recall that a monomial order $<$ on $R$ is a total order on the set of monomials in $R$ that satisfies the following properties:
\begin{itemize}
    \item[(1)] $1 < x_i$ for all $i$,
    \item[(2)] If $u < v$, then $uw < vw$ for all monomials $u, v, w$.
\end{itemize}
In the standard Gr\"obner basis theory, the initial operation is defined for ideals of $R$ with respect to a fixed monomial order. However, in this paper, we consider the initial "algebra", i.e., we consider the initial algebra of a subalgebra of $R$ instead of an ideal. 

\begin{definition}\label{def:inisagbi}
    Let $R = k[x_1, \ldots, x_n]$ be a polynomial ring over a field $k$ and $<$ be a monomial order on $R$. Let $f_1, \ldots, f_m \in R$ be homogeneous polynomials, and let $S = k[f_1, \ldots, f_m]$ be a homogeneous subalgebra of $R$. We define the \emph{initial algebra} $\mathrm{in}_<(S)$ of $S$ with respect to $<$ as follows:
    \begin{equation*}
        \mathrm{in}_<(S) = k[\mathrm{in}_<(g) \mid g \in S].
    \end{equation*}
    If a subset $\mathcal{F} \subset S$ generates the initial algebra $\mathrm{in}_<(S)$, i.e., $\mathrm{in}_<(S) = k[\mathrm{in}_<(g) \mid g \in \mathcal{F}]$, then we say $\mathcal{F}$ is a \emph{SAGBI basis} of $S$ with respect to $<$.
\end{definition}

$\mathcal{F}$ can be infinite. For basic results on initial algebras and SAGBI bases, see \cite{RS90}. Note that the initial algebra $\mathrm{in}_<(S)$ is a monomial algebra over $k$ defined in Definition \ref{def:monomial}. In other words, $\mathrm{in}_<(S)$ is a $k$-subalgebra of $k[x_1, \ldots, x_n]$ generated by monomials. 

One can define the initial algebra for non-homogeneous subalgebras of $R$. However, if we consider the non-homogeneous case, the subalgebra $S$ does not admit a natural $\NN$-grading and thus we cannot define the Poincar\'e series of $S$. Hence, we only consider the homogeneous case in this paper.

Of course, by the Hilbert basis theorem, any initial ideal is finitely generated in Gr\"obner basis theory, but an initial algebra is not necessarily finitely generated even if $S$ is a regular ring. In other words, an initial algebra is not necessarily Noetherian. 

\begin{example}[{\cite[1.20]{RS90}}]\label{ex:nonnoeth1}
    Let $R = k[x, y]$ be a polynomial ring over a field $k$ and $S = k[x+y, xy, xy^2]$ be a subalgebra. Then, for any monomial order $<$ on $R$, $\mathrm{in}_<(S)$ is not Noetherian. In this case, $S$ is not a regular ring but is a hypersurface ring because $S \cong k[X, Y, Z]/(X^2-XYZ+Z^3)$. 
\end{example}

Here is an example where $S$ is a regular ring but $\mathrm{in}_<(S)$ is not Noetherian.

\begin{example}\label{ex:nonnoeth2}
    Let $R = k[x, y, z]$ be a polynomial ring over a field $k$ and $S = k[x+y+z, xy, xy^2]$ be a subalgebra, and we define the monomial order $<$ as follows:
    \begin{equation*}
        x^{a_1}y^{a_2}z^{a_3} > x^{b_1}y^{b_2}z^{b_3} \Longleftrightarrow 
        \begin{cases}
            a_1 + a_2 > b_1 + b_2 \\
            \text{or } a_1 + a_2 = b_1 + b_2 \land (a_1, a_2, a_3) >_{\text{lex}} (b_1, b_2, b_3) \\
        \end{cases}
    \end{equation*}
    where $>_{\text{lex}}$ is the lexicographic order on $\NN^3$. Then $\mathrm{in}_<(S)$ is not Noetherian. In this case, $S$ is a regular ring since $x+y+z, xy, xy^2$ are algebraically independent over $k$.

    We show that $\mathrm{in}_<(S)$ is not Noetherian. First, for any $f(x+y+z, xy, xy^2) \in S$, $f$ is not $y^j$ after specializing $z$ to $0$, i.e., $f(x+y+0, xy, xy^2) \neq y^j$ ($j = 1, 2, \ldots$). If not, we can take $f(x+y+z, xy, xy^2) \in S$ such that $f(x+y+0, xy, xy^2) = y^j$. If we set $x = z = 0$, then $f(y,0,0) = y^j$. If we set $y = z = 0$, then $f(x,0,0) = 0$. This is a contradiction. Second, we show for all $f \in S$, $\mathrm{in}_<(f) \neq y^j \ (j=1,2,\ldots)$ (*). If there is such $f$, we can assume $f$ is homogeneous. Among the monomials of $f$, we can see $y^j > x^ay^bz^c$. In other words: 
    \begin{equation*}
        \begin{split}
            &j > a+b \\ 
            \text{or } &j = a+b \land (0, j, 0) >_{\text{lex}} (a, b, c)
        \end{split}
    \end{equation*}
    holds for all monomials $x^ay^bz^c$ of $f$ since the initial term of $f$ is $y^j$. In the latter case, we have $(a,b,c) = (0,j,0)$. In the former case, $c$ is greater than $0$ because $f$ is the degree $j$ homogeneous polynomial. Thus, evaluating at $z=0$, we obtain:
    \begin{equation*}
        f(x + y + 0, xy, xy^2) = y^j. 
    \end{equation*}
    This contradicts the first point. Assume $S$ has a finite SAGBI basis $\mathcal{F}$. Taking a large $m \gg 0$, we can find $xy^m \notin \mathrm{in}_<(\mathcal{F}) = \left\{ \mathrm{in}_<(f) \mid f \in \mathcal{F} \right\}$ (**). By induction, we have: 
    \begin{equation*}
        \mathrm{in}_<((x+y+z)xy^{m-1} - (xy)(xy^{m-2})) = \mathrm{in}_<(xy^m+xy^{m-1}z) = xy^m
    \end{equation*}
    so we have $xy^m \in \mathrm{in}_<(S)$. By the definition of a SAGBI basis, we find 
    \begin{equation*}
        xy^m = \prod_f \mathrm{in}_<(f)^{e_f}
    \end{equation*}
    for some $f \in \mathcal{F}$ and $e_f \in \NN$. However, (**) implies that we need at least two factors on the right-hand side. Since the left-hand side $xy^m$ contains exactly one $x$ and no $z$, exactly one factor in the product on the right-hand side contains $x$ (with exponent 1), and all other factors must be of the form $y^j$ for some $j > 0$. However, the existence of such a factor of the form $y^j$ contradicts (*). Hence, $\mathrm{in}_<(S)$ is not Noetherian.
\end{example}

We saw that the singularity of a commutative ring does not necessarily affect the Noetherian property of its initial algebra. However, we can find that the Poincar\'e series of the initial algebra is the same as that of the original algebra. This fact does not need the Noetherian property. 

\begin{proposition}[{\cite[Proposition 2.4]{CHV96}}]\label{prop:inipoin}
    Let $S$ be a finitely generated homogeneous subalgebra of a polynomial ring $R = k[x_1, \ldots, x_n]$ over a field $k$ and $<$ be a monomial order on $R$. Then we have $P_S(t) = P_{\mathrm{in}_<(S)}(t)$.
\end{proposition}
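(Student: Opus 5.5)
The plan is to prove, for each $n \in \NN$, that $\dim_k S_n = \dim_k \mathrm{in}_<(S)_n$, where $S_n$ is the degree $n$ component of $S$ and $\mathrm{in}_<(S)_n$ is the span of monomials of degree $n$ in $\mathrm{in}_<(S)$. The identity $P_S(t) = P_{\mathrm{in}_<(S)}(t)$ then follows coefficientwise. No Noetherian hypothesis on $\mathrm{in}_<(S)$ is needed, because everything happens inside the finite-dimensional spaces $S_n \subseteq R_n$.

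First, I will check that $\mathrm{in}_<(S)$ is spanned by the set $\{\mathrm{in}_<(g) \mid g \in S \setminus \{0\}\}$, and not merely generated by it. The reason is that $\mathrm{in}_<(g)\mathrm{in}_<(h) = \mathrm{in}_<(gh)$, since $R$ is a domain and $<$ is multiplicative. Hence any product of initial monomials is again an initial monomial of an element of $S$, and the set of initial monomials is closed under multiplication. Next I will reduce to homogeneous elements. Since $S$ is generated by homogeneous elements, $S = \bigoplus_n S_n$ with $S_n = S \cap R_n$. If $g \in S$ and $\mathrm{in}_<(g)$ has degree $n$, then $\mathrm{in}_<(g) = \mathrm{in}_<(g_n)$ for the homogeneous component $g_n \in S_n$. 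This holds because the initial monomial of $g$ appears in exactly one component, and it dominates every monomial of that component. Therefore $\mathrm{in}_<(S)_n$ has as a $k$-basis the finite set $\{\mathrm{in}_<(g) \mid 0 \neq g \in S_n\}$.

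The key step is the standard linear algebra fact: for a finite-dimensional subspace $W \subseteq R_n$, the number of distinct initial monomials $\mathrm{in}_<(W \setminus \{0\})$ equals $\dim_k W$. I will prove it by Gaussian elimination with respect to the total order $<$ on the finitely many monomials of degree $n$. From any basis of $W$, repeatedly subtract scalar multiples to obtain a basis $w_1, \ldots, w_r$ with pairwise distinct initial monomials. Such vectors are linearly independent, since the largest initial monomial in any nontrivial combination cannot cancel, so $r = \dim_k W$. Conversely, every $0 \neq w \in W$ has initial monomial among $\mathrm{in}_<(w_i)$. To see this, write $w = \sum c_i w_i$; the leading term of the combination is $\mathrm{in}_<(w_j)$ for the largest $j$ with $c_j \neq 0$, in the ordering of the $\mathrm{in}_<(w_i)$. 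Applying this with $W = S_n$ gives $\dim_k S_n = \dim_k \mathrm{in}_<(S)_n$.

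The only subtle point is the homogeneity reduction, namely that the degree $n$ part of $\mathrm{in}_<(S)$ comes only from $S_n$. The argument above handles it, and it works for any monomial order, even one not refining degree. The finiteness of $\dim_k S_n$ is automatic because $S_n \subseteq R_n$. Since these conditions are met, no further obstacle is expected.
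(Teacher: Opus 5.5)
Your proof is correct. The paper gives no argument of its own and simply cites \cite[Proposition 2.4]{CHV96}; your degree-by-degree comparison is the standard proof of that result. It rests on $\mathrm{in}_<(S)_n = \operatorname{span}_k\{\mathrm{in}_<(g) \mid 0 \neq g \in S_n\}$ together with the echelon-form fact that a finite-dimensional $W \subseteq R_n$ has exactly $\dim_k W$ distinct initial monomials. Your argument also never uses that $S$ is finitely generated, only that it is generated by homogeneous elements; the paper needs finite generation only later, to apply Smoke's theorem to $S$ in Corollary~\ref{cor:initialdim}.
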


Using Theorem \ref{th:monomial} and Proposition \ref{prop:inipoin}, we can show that all dimensions of $S$ and $\mathrm{in}_<(S)$ are the same.

\begin{corollary}\label{cor:initialdim}
    Let $S$ be a finitely generated homogeneous subalgebra of a polynomial ring $R = k[x_1, \ldots, x_n]$ over a field $k$ and $<$ be a monomial order on $R$. Then, all dimensions of $S$ and $\mathrm{in}_<(S)$ are the same, i.e., we have:
    \begingroup
    \setlength{\arraycolsep}{0.3em}
    \begin{equation*}
        \begin{array}{*{10}{c}}
            &
            \dimgr(S)
            & = &
            \dim(S)
            & = &
            \trdeg_k(S)
            & = &
            \gkdim_k(S)
            & = &
            d(S)
            \\
            =
            &
            \dimgr(\mathrm{in}_{<}(S))
            & = &
            \dim(\mathrm{in}_{<}(S))
            & = &
            \trdeg_k(\mathrm{in}_{<}(S))
            & = &
            \gkdim_k(\mathrm{in}_{<}(S))
            & = &
            d(\mathrm{in}_{<}(S)).
        \end{array}
    \end{equation*}
    \endgroup
\end{corollary}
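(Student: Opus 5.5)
The plan is to connect the two rows through the Hilbert--Serre dimension, which is the one invariant that visibly passes from $S$ to $\mathrm{in}_<(S)$.

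First, the top row. Since $S=k[f_1,\ldots,f_m]$ is generated by homogeneous elements, it is a finitely generated $\NN$-graded $k$-algebra with $S_0=k$, and hence Noetherian. Smoke's dimension theorem (Theorem \ref{th:smoke}) gives $\dim(S)=d(S)$. The remark after the definition of $\dimgr$ (or \cite[Lemma 1.37]{Blu18}) gives $\dimgr(S)=\dim(S)$, since $\dim(S)<\infty$.

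The remaining two invariants of $S$ are then squeezed by Theorem \ref{th:main}. The ring $S$ is a Hilbert--Serre domain, being a homogeneous subring of $k[x_1,\ldots,x_n]$ (Proposition \ref{prop:hilbertserre} (3)). So
\begin{equation*}
\dim(S)\le\trdeg_k(S)=\gkdim_k(S)\le d(S)=\dim(S),
\end{equation*}
and all five invariants of $S$ coincide. Alternatively, Remark \ref{rem:gkdim} (2) gives $\gkdim_k(S)=\dim(S)$ directly, and the finitely generated case of Lemma \ref{lem:dimtrdeg} gives $\trdeg_k(S)=\dim(S)$.

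Second, the bottom row. $\mathrm{in}_<(S)$ is a monomial algebra in $k[x_1,\ldots,x_n]$, possibly non-Noetherian. Theorem \ref{th:monomial} applies regardless and yields equality of all five of its invariants. Here I need the grading on $\mathrm{in}_<(S)$ to be the standard grading inherited from $R$. I should also check that this matches the grading on $S$ under which $P_S$ is computed. Since the $f_i$ need not have degree one, I take the grading on $S$ to be the one induced from $R$ as well, so both Poincar\'e series use the same degree function. Initial monomials of homogeneous elements are homogeneous of the same degree, so this is consistent.

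Third, the link between the rows. Proposition \ref{prop:inipoin} gives $P_S(t)=P_{\mathrm{in}_<(S)}(t)$, hence $d(S)=d(\mathrm{in}_<(S))$, and the two rows join.

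The main obstacle is the grading point. Smoke's theorem as stated in Theorem \ref{th:smoke} makes no standard-generation assumption, only that $R$ is Noetherian $\NN$-graded with $R_0$ a field. It therefore applies to $S$ with the inherited, possibly non-standard, grading, and nothing more than the consistency check above should be needed.
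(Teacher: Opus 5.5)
Your proposal is correct and follows the paper's proof. Smoke's theorem, supplemented (as you do) by the Noetherian equality $\dimgr=\dim$ and the squeeze from Theorem \ref{th:main}, handles the row for $S$; Theorem \ref{th:monomial} handles the row for $\mathrm{in}_<(S)$; and Proposition \ref{prop:inipoin} joins the two rows via $d(S)=d(\mathrm{in}_<(S))$, with your grading-consistency remark being the right check.
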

\begin{proof}
    The first line follows from Smoke's dimension theorem and the second from Theorem \ref{th:monomial}. The equality between the first line and the second line is given by $d(S) = d(\mathrm{in}_<(S))$ by Proposition \ref{prop:inipoin}. 
\end{proof}

By this result, we can compute the Krull dimension of a finitely generated homogeneous subalgebra $S$ of a polynomial ring $R = k[x_1, \ldots, x_n]$ over a field $k$ by passing to an initial algebra of $S$. 

We recall the Jacobian criterion for algebraic independence in modern language. 

\begin{proposition}[{\cite[Lemma 8, Theorem 22, and Remark 23]{MSS14}}]\label{prop:jacobian}
    Let $k$ be a field and $f_1, \ldots, f_m \in k[x_1, \ldots, x_n]$ be polynomials with $m \leq n$. If the Jacobian matrix
    \begin{align*}
        \mathcal{J}_{(x_1, \ldots, x_n)}(f_1, \ldots, f_m) = \left(\frac{\partial f_i}{\partial x_j}\right)
        =
        \begin{pmatrix}
            \frac{\partial f_1}{\partial x_1} & \cdots & \frac{\partial f_1}{\partial x_n} \\[1ex]
            \vdots & \ddots & \vdots \\[1ex]
            \frac{\partial f_m}{\partial x_1} & \cdots & \frac{\partial f_m}{\partial x_n}
        \end{pmatrix}
        \in \mathrm{Mat}_{m \times n}(k(x_1, \ldots, x_n))
    \end{align*}
    of $f_1, \ldots, f_m$ has rank $m$, then $f_1, \ldots, f_m$ are algebraically independent over $k$. If $\operatorname{char}(k) = 0$, the converse also holds.
\end{proposition}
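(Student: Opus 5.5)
We prove the two directions separately. For the first we show the contrapositive: if $f_1,\ldots,f_m$ are algebraically dependent over $k$, then the Jacobian matrix has rank $<m$. For the converse in characteristic $0$ we use derivations of $k(x_1,\ldots,x_n)$.

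\emph{Setup for the first direction.} Consider the evaluation map $\varphi\colon k[y_1,\ldots,y_m]\to k[x_1,\ldots,x_n]$, $y_i\mapsto f_i$, and let $P=\ker\varphi$; this is a prime ideal. Induct on $m$, the case $m=1$ being immediate (a single $f_1$ is dependent only if it is constant, and then its row of $\mathcal J$ vanishes). Any $m-1$ rows of a rank-$m$ matrix are independent, so by induction $f_1,\ldots,f_{m-1}$ are algebraically independent. Hence $\trdeg_k k(f_1,\ldots,f_m)=m-1$, so $\height P=1$. Since $k[y]$ is a UFD, $P=(F)$ for an irreducible $F$, and every nonzero element of $P$ has total degree $\ge\deg F$.

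\emph{The kernel vector.} Differentiating $F(f_1,\ldots,f_m)=0$ with respect to $x_j$ gives
\[
\sum_i \frac{\partial F}{\partial y_i}(f)\,\frac{\partial f_i}{\partial x_j}=0 \quad\text{for all } j .
\]
So the vector $v=\bigl(\frac{\partial F}{\partial y_i}(f)\bigr)_i$ lies in the left kernel of $\mathcal J$. If $v\neq 0$, the rank of $\mathcal J$ is $<m$ and we are done. Suppose instead $v=0$. Then each $\partial F/\partial y_i$ lies in $P=(F)$ but has smaller total degree than $F$, so $\partial F/\partial y_i=0$ identically for every $i$. In characteristic $0$ this forces $F$ to be constant, which is absurd.

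\emph{Characteristic $p$ (the main obstacle).} Here the vanishing of all partials only gives $F=\sum_a c_a y^{pa}$, and we must still reach a contradiction. Choose $\lambda_1,\ldots,\lambda_s\in k$, linearly independent over $k^p$, whose $k^p$-span contains all the coefficients $c_a$. Write $c_a=\sum_l \lambda_l b_{a,l}^p$ and set $G_l=\sum_a b_{a,l}y^a$. Then
\[
0=F(f)=\sum_l \lambda_l\, G_l(f)^p .
\]
The key claim is that the $\lambda_l$ stay linearly independent over $k(x)^p=k^p(x_1^p,\ldots,x_n^p)$. To see this, clear denominators and compare coefficients in the $k$-basis of $k[x]$ given by monomials in $x_1,\ldots,x_n$: monomials in the $x_i^p$ form a basis of $k[x^p]$ over $k$ and of $k^p[x^p]$ over $k^p$. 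The claim gives $G_l(f)=0$ for all $l$. Some $G_l$ is nonzero, and it has degree $<\deg F$ and lies in $P$, which is a contradiction.

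\emph{Converse in characteristic $0$.} Assume $f_1,\ldots,f_m$ are algebraically independent and extend them to a transcendence basis $f_1,\ldots,f_m,g_{m+1},\ldots,g_n$ of $k(x)/k$. In characteristic $0$ the extension $k(x)/k(f,g)$ is finite and separable. Hence each partial derivation $\partial/\partial f_i$ of $k(f,g)$ extends uniquely to a $k$-derivation $D_i$ of $k(x)$. Every $k$-derivation $D$ of $k(x)$ satisfies the chain rule $D(h)=\sum_j \frac{\partial h}{\partial x_j}D(x_j)$. Applying this to $h=f_{i'}$ and $D=D_i$ gives
\[
\mathcal J\cdot\bigl(D_i(x_j)\bigr)_{j,i}=\bigl(D_i(f_{i'})\bigr)_{i',i}=I_m ,
\]
so $\mathcal J$ has rank $m$.
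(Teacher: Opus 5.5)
Your proof is correct, and it necessarily differs from the paper's treatment: the paper gives no argument of its own and simply imports the statement from \cite{MSS14}, where it sits inside a much heavier framework (differentials of field extensions and, in characteristic $p$, the Witt--Jacobian machinery).

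Your route is self-contained and uses only basic field theory. For the easy direction you use a minimal annihilating polynomial and the chain rule. For the converse you extend the derivations $\partial/\partial f_i$ uniquely along the finite separable extension $k(x)/k(f,g)$.

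It also shows exactly where the characteristic enters. In the easy direction, your key claim is that a $k^p$-linearly independent family in $k$ stays linearly independent over $k(x)^p$. This is precisely MacLane's criterion for the separability of $k(x)/k$, which is why that direction holds in every characteristic. In the converse, characteristic $0$ is used only to make $k(x)/k(f,g)$ separable. That is exactly what breaks for, e.g., $f_1=x_1^p$, which is algebraically independent but has zero Jacobian. What the citation buys instead is brevity and a pointer to the finer positive-characteristic theory.

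One small simplification: the induction on $m$ and the height-one/UFD step are not needed. Take $F$ to be a nonzero element of $P$ of minimal total degree. This gives directly the only property you use, namely that no nonzero element of $P$ has smaller degree. Both the characteristic-$0$ and characteristic-$p$ contradictions then go through unchanged.
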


This proposition is useful for computing the transcendence degree of a finitely generated subalgebra of a polynomial ring. In positive characteristic, however, algebraic independence does not necessarily imply that the Jacobian matrix has full row rank. Thus, the transcendence degree, and hence the Krull dimension, cannot always be determined by the usual Jacobian criterion. In such a situation, Corollary \ref{cor:initialdim} allows us to compute the Krull dimension by passing to the initial algebra.

\begin{example}\label{ex:initial-nonnoeth-jacobian}
    Let $k$ be a field and consider the following homogeneous subalgebra of $k[x,y]$:
    \begin{align*}
        S = k[x^8y^5+x^7y^6,\ x^{15}y^{11},\ x^{22}y^{17}].
    \end{align*}
    We take a monomial order $<$ with $x>y$. This example is obtained from \cite[Example 3.5]{HT25} by setting
    \begin{align*}
        v_1=(8,5), \qquad v_2=(7,6).
    \end{align*}
    Indeed, we have
    \begin{align*}
        v_1+v_2=(15,11), \qquad v_1+2v_2=(22,17).
    \end{align*}
    Hence, by \cite[Example 3.5]{HT25}, the initial algebra of $S$ is the monomial algebra
    \begin{align*}
        \mathrm{in}_{<}(S)
        =
        k[x^8y^5,\ x^{8+7m}y^{5+6m}\mid m\geq 1].
    \end{align*}
    In particular, $\mathrm{in}_{<}(S)$ is not Noetherian.

    Let $M$ be the submonoid of $\NN^2$ corresponding to $\mathrm{in}_{<}(S)$. Then
    \begin{align*}
        M=\langle (8,5),\ (8+7m,5+6m)\mid m\geq 1\rangle.
    \end{align*}
    Let $G$ be the group of fractions of $M$. Since $(8,5)$ and $(15,11)$ belong to $M$ and we have: 
    \begin{align*}
        \det
        \begin{pmatrix}
            8 & 15 \\
            5 & 11
        \end{pmatrix}
        =
        13
        \neq 0, 
    \end{align*}
    we have $\rank(G)=2$. Therefore, by Theorem \ref{th:gilmer}, we obtain
    \begin{align*}
        \dim(\mathrm{in}_{<}(S))=2.
    \end{align*}
    By Corollary \ref{cor:initialdim}, it follows that
    \begin{align*}
        \dim(S)=\dim(\mathrm{in}_{<}(S))=2.
    \end{align*}

    On the other hand, this dimension is not detected by the usual Jacobian computation (Proposition \ref{prop:jacobian}) in characteristic $13$. Indeed, let
    \begin{align*}
        f_1=x^8y^5+x^7y^6,\qquad
        f_2=x^{15}y^{11},\qquad
        f_3=x^{22}y^{17}.
    \end{align*}
    Then the $2\times 2$ minors of the Jacobian matrix
    \begin{align*}
        \left(\frac{\partial f_i}{\partial x_j}\right)
        =
        \begin{pmatrix}
            \frac{\partial f_1}{\partial x} & \frac{\partial f_1}{\partial y} \\[1ex]
            \frac{\partial f_2}{\partial x} & \frac{\partial f_2}{\partial y} \\[1ex]
            \frac{\partial f_3}{\partial x} & \frac{\partial f_3}{\partial y}
        \end{pmatrix}
    \end{align*}
    are
    \begin{align*}
        13x^{21}y^{15}(x-y),\qquad
        13x^{28}y^{21}(2x-y),\qquad
        13x^{36}y^{27}.
    \end{align*}
    Thus, if $\operatorname{char}(k) \neq 13$, none of these minors vanishes. In particular, each of the pairs $\{ f_1,\ f_2 \}$, $\{ f_1,\ f_3 \}$, and $\{ f_2,\ f_3 \}$ is algebraically independent over $k$. Therefore, we have $\dim(S) = \trdeg_k(S) = 2$. However, if $\operatorname{char}(k) = 13$, all these minors vanish. Therefore, in characteristic $13$, the usual Jacobian computation does not show that $\dim(S)=2$. Nevertheless, the initial algebra method above computes the dimension of $S$ by passing to the non-Noetherian monomial algebra $\mathrm{in}_{<}(S)$.

    The following Macaulay2 computation illustrates the initial terms obtained by the package \texttt{SubalgebraBases} (\cite{BCD}, \cite{BCD24}). 
    \begin{lstlisting}
        Macaulay2, version 1.25.11
        Type "help" to see useful commands
        i1 : needsPackage "SubalgebraBases"
        R = ZZ/13[x,y, MonomialOrder=>Lex];
        P = {x^8*y^5 + x^7*y^6, x^15*y^11, x^22*y^17};
        SB = sagbi(P, Limit=>100);
        isSAGBI SB
        G = flatten entries gens SB;
        inG = apply(G, f -> leadTerm f);
        inG
        o1 = SubalgebraBases
        o1 : Package
        o5 = false
               8 5   15 11   22 17   29 23   36 29   43 35   50 41
        o8 = {x y , x  y  , x  y  , x  y  , x  y  , x  y  , x  y  }
        o8 : List
    \end{lstlisting}
    The output \texttt{false} of \texttt{isSAGBI SB} is consistent with the fact that the SAGBI basis is infinite. The non-finite generation of $\mathrm{in}_{<}(S)$ is not proved by this computation, but follows from \cite[Example 3.5]{HT25}.
\end{example}

We have seen that the inequalities in our main theorem, Theorem \ref{th:main}, are actually equalities for the monomial algebra, in particular, the initial algebra. However, in general, these inequalities can be strict even if $R$ is a domain as we have already mentioned. The next section is devoted to giving some examples of this phenomenon.

\section{Examples and Counterexamples}\label{sec:examples}

In this section, we collect examples and counterexamples illustrating the scope and limitations of the results above. As we have already mentioned, the case when $R$ is Noetherian is an obvious case. Therefore, we will see some examples of non-Noetherian Hilbert--Serre rings. In particular, we will see some examples of non-Noetherian Hilbert--Serre domains $R$.

First, we give two standard examples of non-Noetherian Hilbert--Serre domains. 

\begin{example}\label{ex:1}
    Let $R = k[x, xy, xy^2, \ldots]$ be a subalgebra of $k[x, y]$ over a field $k$. Then $R$ is a non-Noetherian Hilbert--Serre domain. $R$ can be obtained as the initial algebra of $k[x+y, xy, xy^2]$ with respect to any monomial order with $x>y$. So we have that all dimensions of $R$ are the same and equal to $2$ by Corollary \ref{cor:initialdim}. Note that $\dimgr(R) = 2$ because there is a non-trivial homogeneous prime ideal $(xy, xy^2, \ldots)$.
\end{example}

\begin{example}\label{ex:2}
    Let $R = k[xy, xy^2, \ldots]$ be a subalgebra of $k[x, y]$ over a field $k$. Then $R$ is a non-Noetherian Hilbert--Serre domain. $R$ is a monoid algebra of the monoid generated by $(1,1), (1,2), \ldots$ in $\NN^2$. Hence, we have $\dimgr(R) = \dim(R) = \trdeg_k(R) = \gkdim_k(R) = d(R) = 2$ by Theorem \ref{th:monomial}. Note that $\dimgr(R) = 2$ because there is a non-trivial homogeneous prime ideal $(x-y) \cap R$.
\end{example}

There is an $\NN$-graded ring $R$ which has a given series as its Poincar\'e series but has the Krull dimension $0$. In particular, we can find a Hilbert--Serre ring $R$ such that $\dim(R) < d(R)$. 

\begin{example}\label{ex:3}
    Let $k$ be a field and $a_0 = 1, a_1, a_2, \ldots$ be any non-negative integer sequence. We consider the idealization $R = k \ltimes \bigoplus_{i=1}^\infty k^{\oplus a_i}$ and define an $\NN$-graded ring on $R$ by setting $R_n = k^{\oplus a_n}$. By the basic property of the idealization, we have $\dim(R) = 0$. However, its Poincar\'e series is given by $P_R(t) = 1 + a_1t + a_2t^2 + \cdots$. Hence, $d(R)$ can be any non-negative integer or $\infty$ depending on the choice of the sequence $a_i$. In other words, the equality in our main theorem, Theorem \ref{th:main}, does not hold in general. Note that in this example, $R$ is a domain if and only if $a_i = 0$ for all $i \geq 1$. 
\end{example}

Even if $R$ is a domain, the equality in our main theorem does not hold in general.

\begin{example}\label{ex:counter}
    Let $k$ be a field and $k(x, y)$ be a rational function field in two variables over $k$. First, let $d > 1$ be an integer. We define a graded ring $R = \bigoplus_{n=0}^\infty R_n$ as follows: 
    \begin{equation*}
        R_n = \mathrm{Span}_k\{x^ny^j \mid 0 \leq j \leq n^d\}.
    \end{equation*}
    Then $R$ is a graded subalgebra of $k(x, y)$ with $\trdeg_k(R) = 2$, and we have
    \begin{equation*}
        P_R(t) = \sum_{n=0}^{\infty} (n^d+1)t^n
    \end{equation*}
    as its Poincar\'e series. It is well-known that this is a rational function such that the order of the pole at $t=1$ is $d+1$.
\end{example}

These examples show that while Theorem \ref{th:main} provides a bound for Hilbert--Serre rings, the gap between these dimensions can be arbitrarily large in the non-Noetherian setting.

We recall that a Hilbert--Serre ring is an $\NN$-graded ring $R = \bigoplus_{n=0}^\infty R_n$ with $R_0 = k$ a field satisfying the following conditions:

\begin{itemize}
    \item[(1)] $\dim_k(R_n) < \infty$ for all $n \in \NN$,
    \item[(2)] $P_R(t) = \sum_{n=0}^\infty \dim_k(R_n)t^n$ has radius of convergence 1 or $\infty$,
    \item[(3)] $\lim_{t \to 1-0} (1-t)^d P_R(t) < \infty$ for some $d \in \NN$.
\end{itemize}

We will see three examples of non-Hilbert--Serre rings. Example \ref{ex:4} does not satisfy (1), Example \ref{ex:5} does not satisfy (2) and Example \ref{ex:6} does not satisfy (3).

\begin{example}\label{ex:4}
    Let $R = k[x_1, x_2, \ldots]$ be the polynomial ring in infinitely many variables over a field $k$. Then $R$ is an $\NN$-graded ring with $R_0 = k$. However, $\dim_k(R_n) = \infty$ for all $n > 0$, so $R$ does not satisfy condition (1).
\end{example}

\begin{example}\label{ex:5}
    Let $k$ be a field and $k(x, y)$ be a rational function field in two variables over $k$. We define a graded ring $R = \bigoplus_{n=0}^\infty R_n$ as follows:
    \begin{equation*}
        R_n = \mathrm{Span}_k\{x^ny^j \mid 0 \leq j \leq 2^n\}.
    \end{equation*}
    Then $R$ is a graded subalgebra of $k(x, y)$, and clearly $\trdeg_k(R) = 2$. However, its Poincar\'e series is given by
    \begin{equation*}
        P_R(t) = \sum_{n=0}^{\infty} (2^n+1)t^n. 
    \end{equation*}
    Its radius of convergence is $\frac{1}{2}$ since $\limsup_{n \to \infty} \sqrt[n]{2^n+1} = 2$. Hence, $R$ does not satisfy condition (2).
\end{example}

\begin{example}\label{ex:6}
    We again consider Example \ref{ex:hilbertserre} (5). Let $k[x_1, x_2^2, x_3^3, \ldots]$ be the polynomial ring in infinitely many variables over a field $k$. Its Poincar\'e series is the generating function of the partition function $p(n)$. It is known that the partition function has the following asymptotic formula:
    \begin{equation*}
        p(n) \sim \frac{1}{4n\sqrt{3}} e^{\pi \sqrt{\frac{2n}{3}}}
    \end{equation*}
    which is known as the Hardy-Ramanujan formula. Hence, its Poincar\'e series has radius of convergence $1$ since $\limsup_{n \to \infty} \sqrt[n]{p(n)} = 1$. However, $R$ has infinite Krull dimension and, by Corollary \ref{cor:finitedimension}, $R$ is not a Hilbert--Serre ring. In particular, it is impossible to find $d \in \NN$ such that $\lim_{t \to 1-0} (1-t)^d P_R(t) < \infty$. Hence, $R$ does not satisfy condition (3).
\end{example}

Finally, we see that the equality $\dimgr(R) = \dim(R)$ does not hold in general even if $R$ is a Hilbert--Serre domain. 

We can easily find a counterexample to this if we do not assume the Hilbert--Serre condition. Fix an integer $d \geq 1$ and $d'$ with $d+2 \leq d' \leq 2d+1$. First, we take a domain $A$ containing a field $k$ with $\dim(A) = d$ and $\dim(A[x]) = d'$. One can verify that such $A$ exists by \cite{Sei54}. Then we define a graded ring $R = \bigoplus_{n=0}^\infty R_n$ as follows:
\begin{equation*}
    R_0 = k,\ R_n = A x^n \text{ for } n > 0.
\end{equation*}
We show the following bijection preserving the inclusion:
\begin{equation*}
    \operatorname{Spec}(A) \to \operatorname{Proj}(R):\ \mathfrak{p} \mapsto \bigoplus_{n=1}^\infty \mathfrak{p} x^n. 
\end{equation*}
This map is well defined and injective. Conversely, for $P \in \operatorname{Proj}(R)$, the ideal $\mathfrak{p} = \{ a \in A \mid ax \in P \}$ is a prime ideal of $A$ and satisfies $P = \bigoplus_{n=1}^\infty \mathfrak{p} x^n$. Hence, the map is surjective. Therefore, we have $\dim(\operatorname{Proj}(R)) = \dim(A) = d$. Since $\mathfrak{m} = \bigoplus_{n=1}^\infty A x^n$ is the maximal homogeneous ideal of $R$, appending $\mathfrak{m}$ to a maximal chain in $\operatorname{Proj}(R)$ gives $\dimgr(R) = d+1$. We also have the following inclusion-preserving bijection:
\begin{equation*}
    \operatorname{Spec}(R) \setminus V_R(\mathfrak{m}) \to \operatorname{Spec}(A[x]) \setminus V_{A[x]}(\mathfrak{m}):\ \mathfrak{p} \mapsto (\mathfrak{p} \underset{A[x]}{:} \mathfrak{m}),
\end{equation*}
where $\mathfrak{m} = \bigoplus_{n=1}^\infty A x^n$ is an ideal of both $R$ and $A[x]$, and $(\mathfrak{p} \underset{A[x]}{:} \mathfrak{m}) = \left\{ f \in A[x] \mid f\mathfrak{m} \subseteq \mathfrak{p} \right\}$. This is the inverse of the natural contraction. Moreover, $\mathfrak{m}$ is a height one prime ideal of $A[x]$; if not, there would exist a prime ideal $\mathfrak{q}$ with $0 \subsetneq \mathfrak{q} \subsetneq \mathfrak{m}$, and choosing a non-zero element of $\mathfrak{q}$ of minimal $x$-degree gives a contradiction. Since $\dim(A[x]) = d'$, this yields $d' \leq \dim(R)$. Therefore, $d+1 = \dimgr(R) < d' \leq \dim(R)$.

Now, we give a counterexample to the equality $\dimgr(R) = \dim(R)$ under the Hilbert--Serre condition. The essential idea of the construction was given by Samuel Griffiths. 

\begin{example}\label{ex:countergr}
    We construct some Hilbert--Serre domains using divisors on a projective scheme. On divisors, \cite[Chapter II]{Har77} is a standard reference.

    Let $k$ be a field which has countable cardinality (for instance, $k = \overline{\QQ}$, $\overline{\FF_p}$ and so on. ), $r \in \NN_{>0}$ an integer, $X = \PP_k^r = \operatorname{Proj}(k[x_0, \ldots, x_r])$ a projective scheme over $k$, and $K = k(x_1, \ldots, x_r)$ the rational function field of $X$. Since $k$ is countable, we can enumerate all prime divisors of $X$ as $C_0, C_1, C_2, \ldots$. We assume $C_0 = \{x_0 = 0\}$ and $C_1 = \{x_1 = 0\}$ hyperplane divisors of $X$. We define a family of effective Weil divisors $\{ E_n \}_{n=0}^\infty$ on $X$ as follows:
    \begin{equation*}
        E_0 = 0,\ E_n = n^c C_0 + \sum_{i=1}^\infty \left\lfloor \frac{n}{\omega_i} \right\rfloor C_i \text{ for } n > 0,
    \end{equation*}
    where $c \in \NN_{>0}$ is an arbitrary fixed integer and $\omega_i = 2^i \deg(C_i)$. Note that $\{E_n\}$ satisfies the following conditions:
    \begin{itemize}
        \item[(1)] $E_n$ is an effective Weil divisor for all $n \in \NN$,
        \item[(2)] $E_n + E_m \leq E_{n+m}$ for all $n, m \in \NN$.
    \end{itemize}
    For a Weil divisor $D$ on $X$, we define $L(D)$ as follows:
    \begin{equation*}
        L(D) = \{ f \in K^\times \mid \div(f) + D \geq 0 \} \cup \{0\} = H^0(X, \mathcal{O}_X(D)).
    \end{equation*}
    Note that every Weil divisor on $X$ is linearly equivalent to $\deg(D)C_0$; see \cite[Proposition II.6.4 (a)]{Har77}. Let $D_0 = 0$ and $D_n = E_{n-1}$ for $n \geq 1$. We define a graded ring $R = \bigoplus_{n=0}^\infty R_n$ as a subalgebra of $K[T]$ as follows:
    \begin{equation*}
        R_n = L(D_n) T^n.
    \end{equation*}
    In other words, $R$ is displayed as follows:
    \begin{equation*}
        R = \bigoplus_{n=0}^\infty L(D_n) T^n = \bigoplus_{n=0}^\infty H^0(X, \mathcal{O}_X(D_n)) T^n \subset K[T].
    \end{equation*}
    First, we show that $d(R) = cr + 1$. By the asymptotic Riemann--Roch theorem (\cite[0BJ8]{Spaut26}), we have:
    \begin{align*}
        \dim_k(R_n) &= \dim_k(H^0(X, \mathcal{O}_X(D_n))) \\
        &= \dim_k(H^0(X, \mathcal{O}_X(C_0)^{\otimes \deg(D_n)})) \\
        &= A \deg(D_n)^r + O(\deg(D_n)^{r-1}),
    \end{align*}
    where $A$ is a positive constant independent of $n$. Moreover, for $n \in \NN$, we have:
    \begin{equation*}
        n^c \leq \deg(E_n) = n^c + \sum_{i=1}^\infty \left\lfloor \frac{n}{\omega_i} \right\rfloor \deg(C_i) \leq n^c + n \sum_{i=1}^{\infty} \frac{1}{2^i} = n^c + n.
    \end{equation*}    
    Hence, $\dim_k(R_n)$ has growth of order $n^{cr}$, and thus $d(R) = cr + 1$.\\     
    Second, we show $\trdeg_k(R) = r + 1$. We can show the following equality: 
    \begin{equation*}
        \bigcup_{n=0}^\infty L(D_n) = K.
    \end{equation*}
    $\subset$ is clear. We show $\supset$. For $f\in K^\times$, write
    \begin{equation*}
        \div(f)=\sum_{\alpha=1}^{p}a_\alpha C_{s_\alpha}
    -\sum_{\beta=1}^{q}b_\beta C_{t_\beta}.
    \end{equation*}
    Only finitely many prime divisors occur in the pole part of $\div(f)$. If $t_\beta=0$, then the coefficient $n^c$ of $C_0$ in $E_n$ is eventually at least $b_\beta$. If $t_\beta\ge1$, then for $n\gg0$ we have:
    \begin{equation*}
        \left\lfloor\frac n{\omega_{t_\beta}}\right\rfloor\ge b_\beta.
    \end{equation*}
    Hence, we have $\div(f)+E_n\ge0$ for $n\gg0$. Thus, we have $f \in L(D_{n+1})$ for sufficiently large $n$. Therefore, we have $\bigcup_{n=0}^\infty L(D_n) = K$. We show $\Frac(R) = K(T)$. $\subset$ is clear. We show $\supset$. Since $1 \in L(D_1)$, we have $T \in R$. Hence, for $f\in K$, there exists $N$ such that $f\in L(D_N)$. Then $fT^N\in R_N$, and hence
    \begin{equation*}
        f=\frac{fT^N}{T^N}\in \Frac(R).
    \end{equation*}
    Thus, we find that $\Frac(R)$ is a field which contains $K$ and $T$. Hence, we have $K(T) \subset \Frac(R)$. Therefore, we have $\trdeg_k(R) = r + 1$. \\ 
    Third, we show $\dim(R) = 2$. We can show $R_{T} = K[T, T^{-1}]$. $\subset$ is clear. We show $\supset$. For $f \in K^\times$, we can find $N$ such that $f \in L(D_N)$. Then we have $fT^N \in R_N \subset R$ and thus $f = \frac{fT^N}{T^N} \in R_T$. Hence, we have $K \subset R_T$. Since $T \in R$, we have $K[T, T^{-1}] \subset R_T$. Therefore, we have $R_T = K[T, T^{-1}]$. Hence, the length of a chain of prime ideals of $R$ which do not contain $T$ is at most $1$. We show that if a prime ideal of $R$ contains $T$, then this prime ideal is $R_+ = \bigoplus_{n>0} R_n$. Let $m, n >0$ be integers and $f \in L(D_m)$ and $g \in L(D_n)$. Then we have:
    \begin{align*}
        \div(fg) + D_{m+n-1} &= \div(f) + \div(g) + D_{m+n-1} \\
        &\geq \div(f) + \div(g) + E_{m+n-2}\\
        &\geq \div(f) + E_{m-1} + \div(g) + E_{n-1} \\
        &\geq \div(f) + D_{m} + \div(g) + D_{n} \\
        &\geq 0.
    \end{align*}
    Hence, we have $fg \in L(D_{m+n-1})$. Thus,
    \begin{equation*}
        fT^m \cdot gT^n = fg T^{m+n-1} \cdot T \in R_{m+n-1} \cdot TR \subset TR. 
    \end{equation*}
    Therefore, $(R_+)^2 \subset TR$, i.e., $R_+ = \sqrt{TR}$. At this point, we have $\dim(R) \leq 2$. We show that $R$ has a non-trivial prime ideal. Let $v_{C_1}$ be a discrete valuation of $K$ associated to $C_1$. We find that the kernel of the following ring homomorphism is a non-trivial prime ideal of $R$:
    \begin{equation*}
        \varphi \colon R \to K \colon \sum f_n T^n \mapsto \sum f_n x_1^n.
    \end{equation*}
    Let $\mathfrak{p} = \ker(\varphi)$. We show $\mathfrak{p} \subsetneq R_+$. Let $f = f_0 + \sum_{n>0} f_n T^n \in \mathfrak{p}$. Since $\div(f_n) + D_n \geq 0$ for $n>0$, we have $v_{C_1}(f_n) \geq -\left\lfloor \frac{n-1}{\omega_1} \right\rfloor$ for $n>0$. Hence, we have $v_{C_1}(f_nx_1^n) = n + v_{C_1}(f_n) \geq n - \left\lfloor \frac{n-1}{\omega_1} \right\rfloor \geq 1$. Thus, specializing $x_1$ to $0$ in $\varphi(f) = \sum f_n x_1^n$ gives $f_0 = 0$. Hence, we have $f \in R_+$. Therefore, we have $\mathfrak{p} \subset R_+$. Since $T \in R_+ \setminus \mathfrak{p}$, we have $\mathfrak{p} \subsetneq R_+$. We can also show $\mathfrak{p} \neq 0$. For $x_1$, there is an integer $N$ such that $x_1 \in L(D_N)$. Then we have $x_1 T^N \in R_N \subset R$. Thus, we can easily check $0 \neq x_1 T^N - T^{N+1} \in \mathfrak{p}$. Hence, we have $0 \subsetneq \mathfrak{p} \subsetneq R_+$. Therefore, we have $\dim(R) = 2$. 

    Finally, we show $\dimgr(R) = 1$. Let $\mathfrak{q}$ be a homogeneous prime ideal of $R$. If $T \in \mathfrak{q}$, then $\mathfrak{q} = R_+$ since $R_+ = \sqrt{TR}$. If not, then $\mathfrak{q}R_T$ is a homogeneous prime ideal of $R_T = K[T, T^{-1}]$. However, there is only one homogeneous prime ideal of $K[T, T^{-1}]$, which is $0$. Hence, we have $\mathfrak{q}R_T = 0$ and thus $\mathfrak{q} = 0$. Therefore, we have $\dimgr(R) = 1$.

    In summary, we have $\dimgr(R) = 1 \leq \dim(R) = 2 \leq \trdeg_k(R) = r + 1 \leq d(R) = cr + 1$. In particular, if we take $c = 2$ and $r = 2$, then we have $\dimgr(R) = 1 < \dim(R) = 2 < \trdeg_k(R) = \gkdim_k(R) = 3 < d(R) = 5$. This shows that the inequalities in Theorem \ref{th:main} can be strict even if $R$ is a Hilbert--Serre domain.
\end{example}

Finally, we leave the following question open.

\begin{question}\label{qu:open}
    Let $a,b,c,d,e \in \NN$ be integers. What conditions on $a,b,c,d,e$ are necessary and sufficient for the existence of a Hilbert--Serre domain $R$ such that $\dimgr(R) = a,\ \dim(R) = b,\ \trdeg_k(R) = c,\ \gkdim_k(R) = d,\ d(R) = e$ ?
\end{question}

As necessary conditions, we have $a \leq b \leq c = d \leq e$ by Theorem \ref{th:main}. Moreover, $\lfloor \frac{b}{2} \rfloor \leq a$ by Proposition \ref{prop:dimupper}. 

\bibliographystyle{amsalpha}
\bibliography{references}

\end{document}